\newtheorem{theo}{Theorem}
\newtheorem{prop}{Proposition}
\newtheorem{lemma}{Lemma}
\newtheorem{remark}{Remark}
\newcommand{\Ref}[1]{(\ref{#1})}
\newenvironment{proof}{\textbf{Proof. }}
{$\bigtriangleup$}
\newenvironment{proofTh1}{\textbf{Proof of Theorem 1. }}
{$\bigtriangleup$ }
\newenvironment{proofTh2}{\textbf{Proof of Theorem 2. }}
{$\bigtriangleup$ }
\newenvironment{eq}{\begin{equation}}{\end{equation}}
\newcommand{\si}{\sigma}
\newcommand{\al}{\alpha}
\newcommand{\be}{\beta}
\newcommand{\ga}{\gamma}
\newcommand{\la}{\lambda}
\newcommand{\de}{\delta}
\newcommand{\LA}{\langle}
\newcommand{\RA}{\rangle}
\newcommand{\ov}[1]{\overline{#1}}
\newcommand{\tr}{{\rm tr}}
\newcommand{\mdeg}{\mathop{\rm mdeg}}
\newcommand{\rank}{\mathop{\rm{rank }}}
\newcommand{\Spec}{\mathop{\rm{Spec }}}
\newcommand{\trdeg}{\mathop{\rm{tr.deg }}}
\newcommand{\St}{\mathop{\rm{St }}}
\newcommand{\Orb}{\mathop{\rm{O }}}
\begin{document}
 \title{The Invariant Ring of Triples of $3\times 3$ Matrices over a
Field of Arbitrary Characteristic}
  \author{A. A. Lopatin
  \\ Chair of Algebra,
 \\ Department of Mathematics, \\
 Omsk State University,\\ 55A Prospect Mira, Omsk 644077, Russia,
 \\e-mail: artem\underline{ }lopatin@yahoo.com  }
 \date{}
\maketitle
 \begin{abstract}
Let $R_{n,d}$ be the ring of invariants of $d$-tuples of $n\times n$
matrices under the simultaneous conjugation action of the general linear group.
A minimal generating system  and a homogeneous system of parameters
of $R_{3,3}$ are determined. Homogeneous systems of parameters
of $R_{3,2}$, $R_{4,2}$ are also pointed out.
\end{abstract}

\section{Introduction}
Let $K$ be an infinite field of the
characteristic $p$ ($p=0,2,3,\ldots$). Denote by
$M_{n,d}(K)=M_{n}(K) \oplus\cdots\oplus M_{n}(K)$ the space of
$d$-tuples of $n\times n$ matrices,
where $d\geq1$. The general linear group $GL_n(K)$
acts by simultaneous conjugation on $M_{n,d}(K)$: for $g\in
GL_n(K)$, $A_i\in M_n(K)$ $(i=\ov{1,d})$, we have
$$g(A_1,\ldots,A_d) = (gA_1g^{-1}, \ldots, gA_dg^{-1}).$$ The
coordinate ring of $M_{n,d}(K)$ is the
polynomial algebra $K_{n,d}=K[x_{ij}(r)\;|1\leq i,j\leq n,\;
r=\ov{1,d}]$, where $x_{ij}(r)$ denotes the function mapping
$(A_1,\ldots,A_d)\in M_{n,d}(K)$ to the $(i,j)$ entry of $A_r$. The action of
$GL_n(K)$ on $M_{n,d}(K)$ induces the action of $GL_n(K)$ on
$K_{n,d}$: $(g\cdot f)(A)=f(g^{-1}A)$, where $g\in GL_n(K)$, $f\in
K_{n,d}$, $A\in M_{n,d}$. Denote by
$$R_{n,d}=\{f\in K_{n,d}\;|\forall g\in GL_n(K): gf=f\}$$ the matrix algebra of invariants.
Let $X_r=(x_{ij}(r))_{1\leq i,j\leq n}$ be the generic matrices of
order $n$ $(r=\ov{1,d})$, and let $\si_k(A)$ be the coefficients
of the characteristic polynomial of $A\in M_n(K)$, i.e.,
$$\det(\la E-A)=\la^n-\si_1(A)\la^{n-1}+\cdots+(-1)^{n}\si_n(A).$$
Here $E$ stands for the identity matrix.
The algebra $R_{n,d}$ is generated by all elements of the form
$\si_k(X_{i_1}\cdots X_{i_s})$~\cite{Don}. The Procesi--Razmyslov
Theorem on relations of $R_{n,d}$ was extended to the case of
arbitrary $p$ in~\cite{Zub96}.

Let $N_0=\{0,1,2,\ldots\}$. The algebra $R_{n,d}$ possesses the
natural $N_0$-grading by degrees and $N_0^d$-grading by
multidegrees. If some $N_0^d$-homogeneous subset $G\subset R_{n,d}$
has the property that $G$ generates $R_{n,d}$
but any proper subset of $G$ does not generate $R_{n,d}$, then $G$ is
called a homogeneous minimal system of generators (shortly h.m.s.o.g.) of
$R_{n,d}$. A h.m.s.o.g. of $R_{2,d}$ was found in~\cite{Sibirskii}
when $p=0$, in~\cite{Procesi} when $p>2$, and in~\cite{DKZ} when
$p=2$. In~\cite{Dom} some upper and lower bounds on the highest degree of
elements of a h.m.s.o.g. of $R_{n,d}$ are pointed out for arbitrary
$p$. In~\cite{Abeasis} in case of $p=0$ the cardinality of a
h.m.s.o.g. of $R_{3,d}$ was calculated  for $d\leq 10$ by means a
computer, and shown a way how the set can be constructed for each $d$ by means
a computer. The least upper bound on degrees of elements of a
h.m.s.o.g. of $R_{3,d}$ was established in~\cite{Lopatin_Comm1} (except
for the case of $p=3$, $d=6k+1$, $k>0$, where error of estimation
of the least upper bound is not greater than $1$). In the given
paper we construct a h.m.s.o.g. of $R_{3,3}$ for arbitrary $p$.

By the Noether normalization
$R_{n,d}$ contains a {\it homogeneous} (with respect to $N_0$-grading)
{\it system of parameters} (shortly h.s.o.p), i.e. a set of
algebraically independent elements
generating a subalgebra over which $R_{n,d}$ is integral.
In case of $p=0$ Teranishi established h.s.o.p.-s of
$R_{3,2}$, $R_{4,2}$~\cite{Teranishi86} and of $R_{2,d}$ ($d\geq2$).
In~\cite{DKZ} h.s.o.p.-s  were established for $R_{2,d}$ for
$p>0$, $d\geq2$. The main result of the paper is the explicit description of
a h.s.o.p. for $R_{3,3}$ over a field of arbitrary characteristic. We also point out
h.s.o.p.-s for $R_{3,2}$, $R_{4,2}$ for arbitrary $p$.

\section{Auxiliary results}

Let $S$ be the free semigroup generated by letters
$\{x_1,x_2,\ldots\}$. For a word $u\in S$ we denote the degree of
$u$ by $\deg(u)$, the multidegree of $u$ by $\mdeg(u)$,
and the degree of $u$ with respect to the letter
$x_j$ by $\deg_{x_j}(u)$. All word are supposed to be non-empty unless
otherwise is stated.

Denote by $R_{n,d}^{+}$ the subalgebra generated by elements of
$R_{n,d}$ of positive degree. An element $r\in R_{n,d}$ is called {\it decomposable}
if it can be get in terms of elements whose degrees are less than that of $r$, i.e.
$r$ belongs to the ideal $(R_{n,d}^{+})^2$. Obviously, $\{r_i\}\in R_{n,d}$ is h.m.s.o.g.
iff $\{\ov{r_i}\}$ is a base of $\ov{R_{n,d}}=R_{n,d}/(R_{n,d}^{+})^2$.
If elements $r_1,r_2$ of $R_{n,d}$ are equal modulo $(R_{n,d}^{+})^2$,
we write $r_1\equiv r_2$. Denote by $K\LA x_1,\ldots,x_d\RA^{\#}$ the free associative
$K$-algebra without unity which is freely generated by $x_1,\ldots,x_d$.
Let ${\rm id}\{f_1,\ldots,f_s\}$ be the ideal generated by $f_1,\ldots,f_s$.
The problem of decomposability of an element of $R_{n,d}$ and the problem
of equality to zero of some element of $N_{n,d}=K\LA x_1,\ldots,x_d\RA^{\#} /
{\rm id}\{x^n\,|\,x\in K\LA x_1,\ldots,x_d\RA^{\#}\}$ are closely related
(see Lemma~\ref{lemmaLR} below).
Let $A_{n,d}$ be $K$-algebra without unity generated by generic matrices
$X_1,\ldots, X_d$. The homomorphism of algebras $\phi:A_{n,d}\to N_{n,d}$,
which maps $X_i$ in $x_i$, is defined correctly.

A word $w\in S$ is called {\it canonical} with respect to $x_i$,
if it has one of the following forms: $w_1$, $w_1x_iw_2$,
$w_1x_i^2w_2$, $w_1x_i^2ux_iw_2$, where words $w_1,w_2,u$ do not
contain $x_i$, words $w_1,w_2$ can be empty. If a word is
canonical with respect to all letters, then we call it {\it canonical}.
In~\cite{Lopatin_Comm1} the next lemmas are proved.

\begin{lemma}\label{lemmaLN}
1. Applying identities $x_iux_i=-x_i^2u-ux_i^2$, $x_iux_i^2=-x_i^2ux_i$  of
$N_{3,d}$, every non-zero word $w\in N_{3,d}$ can be represented as
a sum of canonical words which belong to the same homogeneous
component as $w$.

2. If $p\neq3$, then   $x_i^2ux_j^2=0$ holds in $N_{3,d}$.

3. The equality  $x_j^2 x_i^2 x_j x_i=-x_i^2 x_j^2 x_ix_j$ holds in $N_{3,d}$.

4. The inequality $x_1^2x_2^2x_1\neq0$ holds in $N_{3,d}$.

5. Let $p=3$ and for some identity $\sum\al_iu_i=0$ of $N_{3,d}$, $\al_i\in
K$, exists $k$ such that $\deg_{x_k}(u_i)=1,2$. Then after the substitution $x_k=1$
in $\sum\al_iu_i=0$ we get an identity of $N_{3,d}$.
\end{lemma}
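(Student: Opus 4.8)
The common framework for all five statements is the algebra $N_{3,d}=K\LA x_1,\dots,x_d\RA^\#/\id{x^3}$, and the engine driving them is the list of consequences of the relation ``$x^3=0$''. Since $K$ is infinite, expanding $(tx_i+x_j)^3=0$ and comparing powers of $t$ shows that $\id{x^3}$ is generated by the cubes $x_i^3$ together with the partial linearizations $g_{ij}:=x_i^2x_j+x_ix_jx_i+x_jx_i^2$ $(i\neq j)$; equivalently, $a^2b+aba+ba^2=0$ for all $a,b\in N_{3,d}$. From this I would first record three rewriting identities valid for every $u\in N_{3,d}$: $x_iux_i=-x_i^2u-ux_i^2$ (take $a=x_i$, $b=u$), then $x_iux_i^2=-x_i^2ux_i$ (multiply the previous one on the right by $x_i$ and use $x_i^3=0$), and consequently $x_i^2ux_i^2=0$. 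Each of these rules preserves the multidegree.

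\emph{Part 1.} Treat the letters one at a time, say $x_1$ first. Since the three rules leave $\deg_{x_1}$ unchanged while a canonical word has $\deg_{x_1}\le 3$, a word with $\deg_{x_1}(w)\ge 4$ can only reduce to $0$ — and indeed repeatedly merging a pair of $x_1$'s eventually produces a factor $x_1^3$ or $x_1^2vx_1^2$ and kills it. If $\deg_{x_1}(w)\le 3$, inspection of the few patterns (the $x_1$'s consecutive, or ``$x_1^2$ then $x_1$'', or ``$x_1$ then $x_1^2$'', or separated) shows that one or two applications of $x_iux_i=-x_i^2u-ux_i^2$, $x_iux_i^2=-x_i^2ux_i$ and $x_1^3=0$ bring $w$ to a sum of words canonical with respect to $x_1$. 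Then pass to $x_2$, and so on; the point to check is that a rewrite performed for $x_j$ only relocates whole $x_j$-free chunks, hence preserves both the number and the adjacency pattern of the occurrences of each $x_i$ $(i<j)$, so canonicality with respect to $x_1,\dots,x_{j-1}$ is not destroyed.

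\emph{Parts 2 and 3.} These are where the characteristic intervenes. For part 2, I would induct on the total degree, rewriting $x_i^2ux_j^2$ by means of the identities above together with the relations $g_{il}=0$; each step either creates a word with a factor $x_l^3$ or $x_l^2vx_l^2$ (hence $0$), or strictly lowers the degree, or reproduces $x_i^2ux_j^2$. Collecting the scalar, one should arrive at a relation $c\,x_i^2ux_j^2\equiv 0$ in which every prime factor of the integer $c$ equals $3$ — the factor $3$ being forced by the collapse of the partial-linearization relations $x_i^2x_l+x_ix_lx_i+x_lx_i^2$ — so that $x_i^2ux_j^2=0$ whenever $p\neq 3$. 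Part 3 is then free for $p\neq 3$: from $g_{ij}=0$ one gets $x_j^2x_i^2x_jx_i=-x_j^2(x_ix_j)x_i^2$ and $x_i^2x_j^2x_ix_j=-x_i^2(x_jx_i)x_j^2$, and both right-hand sides vanish by part 2; for $p=3$ one instead derives the identity directly from the $g_{ij}$'s and relations such as $(x_ix_j)^3=0$ and $(x_i^2x_j)^3=0$. I expect the bookkeeping in part 2 — isolating precisely where the coefficient $3$ enters and checking that nothing else obstructs the reduction — to be the main technical difficulty.

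\emph{Parts 4 and 5.} For part 4 one shows $x_1^2x_2^2x_1$ does not lie in the multidegree-$(3,2)$ component of $\id{x^3}$, which is spanned by the images of $x_1^3$, $g_{12}$, $g_{21}$ under the degree-$(3,2)$ pieces of left/right multiplication — a finite-dimensional linear algebra check; equivalently, exhibit a linear functional on the degree-$(3,2)$ words that vanishes on the ideal but not on $x_1^2x_2^2x_1$, e.g.\ the one coming from the pairing with $R_{3,3}$ furnished by Lemma~\thRef{lemmaLR} (evaluating $\tr(X_1^2X_2^2X_1X_3)$ at suitable $3\times3$ matrices). For part 5, first split $\sum\al_iu_i=0$ into its $x_k$-degree-$1$ and $x_k$-degree-$2$ parts, each of which is again an identity of $N_{3,d}$ because $\id{x^3}$ is $N_0^d$-graded; one may therefore assume all $u_i$ have the same $x_k$-degree. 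Passing to the unital free algebra and applying the substitution homomorphism $x_k\mapsto 1$, each generator of $\id{x^3}$ of $x_k$-degree $\le 2$ is sent into the ideal: the ones not involving $x_k$ are fixed, while $g_{jk}=x_j^2x_k+x_jx_kx_j+x_kx_j^2\mapsto 3x_j^2$ and $g_{kj}=x_k^2x_j+x_kx_jx_k+x_jx_k^2\mapsto 3x_j$, which vanish because $p=3$. Finally, words $u_i$ equal to $x_k$ or $x_k^2$ become empty under the substitution but, by multihomogeneity again, occur with total coefficient $0$, so the substituted relation is a relation among non-empty words, i.e.\ an identity of $N_{3,d}$.
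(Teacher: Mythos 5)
The paper itself gives no argument for this lemma (it is quoted from~\cite{Lopatin_Comm1}), so your proposal has to stand on its own, and it has a structural flaw at its base. The claim that $\id{x^3}$ is generated, as an ordinary ideal, by the cubes $x_i^3$ of the \emph{letters} together with the letter-level linearizations $g_{ij}=x_i^2x_j+x_ix_jx_i+x_jx_i^2$ is false; the Vandermonde argument only shows these elements \emph{lie in} $\id{x^3}$, not that they generate it. Concretely, $f=x_1^2x_2^2+x_1x_2^2x_1+x_2^2x_1^2$ is in $\id{x^3}$ (it is the $t$-linear part of $(x_1+tx_2^2)^3$), but the multidegree-$(2,2)$ component of the ideal generated by $x_1^3,x_2^3,g_{12},g_{21}$ is spanned by $x_2g_{12},\,g_{12}x_2,\,x_1g_{21},\,g_{21}x_1$, and every element of that span has equal coefficients at $x_1^2x_2^2$ and $x_1x_2x_1x_2$, while $f$ does not; so $f$ is not in the letter-level ideal. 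The correct multihomogeneous generating set consists of $w^3$, $u^2v+uvu+vu^2$ and the full linearizations for arbitrary \emph{words} $u,v,w$. This matters where you use the generating set: your Part 4 check ("spanned by the images of $x_1^3,g_{12},g_{21}$") tests non-membership in a strictly smaller ideal and therefore cannot establish $x_1^2x_2^2x_1\neq0$ in $N_{3,d}$; your Part 5 argument, as written, also verifies the wrong generators (here the repair is easy: $g(u,v)$ with $u$ or $v$ collapsing to the empty word under $x_k\mapsto1$ still produces a multiple of $3$, so Part 5 survives with the corrected generating set). Part 1 is unaffected, since there you only need $a^2b+aba+ba^2=0$ in $N_{3,d}$ for all $a,b$, which follows from $(a+tb)^3=0$ and the infinitude of $K$ without any generating-set claim; your bookkeeping that $x_j$-rewrites preserve canonicality in earlier letters is sound.

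The heavier problem is that Parts 2 and 3 --- which carry the entire characteristic-$3$ dichotomy the paper relies on --- are not actually proved. For Part 2 you describe a rewriting strategy and then assert that "one should arrive at a relation $c\,x_i^2ux_j^2\equiv0$ in which every prime factor of $c$ equals $3$"; that assertion \emph{is} the lemma, and no derivation is given (you acknowledge this is the main difficulty). Likewise Part 3 for $p=3$ is reduced to "derive it directly from the $g_{ij}$'s and $(x_ix_j)^3=0$, $(x_i^2x_j)^3=0$" with no computation; only the $p\neq3$ case (both sides vanish by Part 2) is genuinely argued, and it depends on the unproven Part 2. For Part 4, the fallback via Lemma~\ref{lemmaLR} is also not a proof: item 1 of that lemma gives "$\tr(GX_i)$ indecomposable $\Rightarrow\phi(G)\neq0$", so you would first have to prove indecomposability of $\tr(X_1^2X_2^2X_1X_3)$ by some independent means (merely "evaluating at suitable matrices" does not test decomposability), and one must take care not to run circularly into Lemma~\ref{lemma_LI}, which in the paper is deduced \emph{from} item 4. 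So, apart from Parts 1 and (after repair) 5, the proposal is a program rather than a proof of items 2--4.
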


\begin{lemma}\label{lemmaLR}
1. Let $G\in A_{n,d}$, $i=\ov{1,d}$. Then

a) if $G$ do not contain $X_i$ and $\tr(GX_i)$ is decomposable, then
$\phi(G)=0$;

b) if $\phi(G)=0$, then $\tr(GX_i)$ is decomposable.

2. Let $G\in A_{n,d}$ and $G$ do not contain $X_i$ for some
$i=\ov{1,d}$. Then decomposability of $\tr(GX_i^2)$ implies
$\phi(G)x_i+x_i\phi(G)=0$ in $N_{3,d}$.

3. Let $U\in A_{n,d}$ be a word. If $\tr(U)$ is indecomposable, then $\tr(U)$
can be represented in the form $\tr(U)\equiv\sum\al_i \tr(W_i)$, where
words $W_i\in A_{n,d}$ are canonical, $\mdeg(U)=\mdeg(W_i)$,
$\al_i\in K$.

4. The identity $\si_2(UV)\equiv\tr(U^2V^2)$, where
$U,V\in A_{n,d}$, holds in $R_{3,d}$.

5.  The explicit upper bound on degrees of elements of
a h.m.s.o.g. of $R_{3,3}$ is equal to $6$, if $p\neq3$, and is equal to $8$, if $p=3$.

6.  The explicit upper bound on degrees of elements of
a h.m.s.o.g. of $R_{3,d}$ $(d\geq2)$ is equal to $6$, if $p=0$.
 \end{lemma}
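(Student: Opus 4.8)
All six assertions ultimately rest on the Procesi--Razmyslov description of the trace identities of $M_n$, valid in arbitrary characteristic by~\cite{Zub96}, used through the Cayley--Hamilton identity $A^n=\si_1(A)A^{n-1}-\cdots\pm\si_n(A)E$ in two ways. First, it shows that every $A^n$ lies in the ideal of the trace algebra $RA_{n,d}$ generated by the positive-degree pure traces, so that the kernel of $\phi$ consists exactly of those elements of $A_{n,d}$ lying in that ideal, equivalently of elements of the form $\sum_k c_kM_k$ with each $c_k$ a product of positive-degree pure traces and each $M_k\in A_{n,d}$ of positive degree. Second, it is the source of the explicit rewriting rules used for the degree bounds. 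Accordingly, parts 1--4 are the ``dictionary'' between decomposability in $R_{n,d}$ and relations in $N_{n,d}$, and parts 5--6 are the sharp degree bounds, for which the dictionary together with Lemma~\ref{lemmaLN} is the main tool.

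For part 1(a), I would take a decomposition $\tr(GX_i)\in(R_{n,d}^{+})^2$ and pass to its component of multidegree $1$ in $X_i$; since only products with exactly one factor linear in $X_i$ and one factor (of positive degree) free of $X_i$ survive, collecting terms gives $\tr(GX_i)=\sum_k c_k\,\tr(H_kX_i)=\tr\bigl((\sum_k c_kH_k)X_i\bigr)$ with $c_k\in R_{n,d}^{+}$ free of $X_i$ and $H_k\in A_{n,d}$ free of $X_i$, so $\tr\bigl((G-\sum_k c_kH_k)X_i\bigr)$ vanishes identically. Non-degeneracy of the bilinear form $\tr(AB)$ on $M_n(K)$ (its Gram matrix in the matrix units is a permutation matrix) forces $G=\sum_k c_kH_k$ in $RA_{n,d}$, and applying the projection $RA_{n,d}\to A_{n,d}$ killing positive-degree pure traces, followed by $\phi$, yields $\phi(G)=0$. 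Part 1(b) is the converse and is easier: $\phi(G)=0$ gives $G=\sum_k c_kM_k$ as above, so $\tr(GX_i)=\sum_k c_k\,\tr(M_kX_i)$ is a sum of products of two positive-degree invariants of strictly smaller degree; the same reasoning shows, more generally, that $\phi(H)=0$ for $H\in A_{n,d}$ implies $\tr(H)$ decomposable. Part 2 follows from 1(a) by partial linearization: substituting $X_i\mapsto X_i+X_{d+1}$ and extracting the component of multidegree $1$ in both $X_i$ and $X_{d+1}$ gives, by cyclicity, that $\tr\bigl((GX_i+X_iG)X_{d+1}\bigr)$ is decomposable in $R_{n,d+1}$ with $GX_i+X_iG$ free of $X_{d+1}$; then 1(a) gives $\phi(GX_i+X_iG)=0$, i.e.\ $\phi(G)x_i+x_i\phi(G)=0$ in $N_{n,d}\subset N_{n,d+1}$; take $n=3$. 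Part 3 uses the same circle of ideas: apply Lemma~\ref{lemmaLN}.1 to $\phi(U)$ to get $\phi(U)=\sum_i\al_i\phi(W_i)$ with the $W_i$ canonical of the same multidegree as $U$, so $\phi(U-\sum_i\al_iW_i)=0$, and by the general statement just made $\tr(U-\sum_i\al_iW_i)$ is decomposable, i.e.\ $\tr(U)\equiv\sum_i\al_i\tr(W_i)$.

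Part 4 is a direct identity valid over the integers, hence in every characteristic. Writing $\mathrm{adj}(A)=A^2-\si_1(A)A+\si_2(A)E$ for the adjugate of a $3\times 3$ matrix (a rearrangement of Cayley--Hamilton), one has $\si_2(A)=\tr(\mathrm{adj}(A))$ and $\mathrm{adj}(UV)=\mathrm{adj}(V)\,\mathrm{adj}(U)$, so
\[
\si_2(UV)=\tr\bigl((V^2-\si_1(V)V+\si_2(V)E)(U^2-\si_1(U)U+\si_2(U)E)\bigr);
\]
on expanding, every summand on the right except $\tr(V^2U^2)$ is a product of two pure traces of positive degree strictly below $2\deg(U)+2\deg(V)$, hence decomposable, while $\tr(V^2U^2)=\tr(U^2V^2)$ by cyclicity.

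Finally, for parts 5--6: by part 3 it suffices to bound $\deg(W)$ for canonical words $W$ with $\tr(W)$ indecomposable. Canonicity forces $\deg_{x_j}(W)\le 3$ for every $j$, hence $\deg(W)\le 3d$, in particular $\deg(W)\le 9$ for $R_{3,3}$. The plan is to show that every canonical word of degree $7$, $8$, or $9$ (for $p\ne 3$), resp.\ of degree $9$ (for $p=3$), resp.\ of degree at least $7$ for $R_{3,d}$ (for $p=0$), has decomposable trace, by combining the relations $x_i^2ux_j^2=0$ (valid for $p\ne 3$), $x_iux_i=-x_i^2u-ux_i^2$, $x_iux_i^2=-x_i^2ux_i$ and $x_j^2x_i^2x_jx_i=-x_i^2x_j^2x_ix_j$ of Lemma~\ref{lemmaLN} with parts 1(b) and 3: whenever $\phi(W)$ can be rewritten as a combination of words of strictly smaller degree, or becomes $0$, the trace $\tr(W)$ is decomposable. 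A finite case analysis over the multidegrees $(a,b,c)$ with $a+b+c\ge 7$ and each entry $\le 3$ disposes of $R_{3,3}$, part 4 being used to reidentify the borderline traces $\tr(U^2V^2)$ with $\si_2(UV)$; for $R_{3,d}$ with $p=0$ one adds a polarization step reducing arbitrary $d$ to bounded degree. Sharpness comes from Lemma~\ref{lemmaLN}.4: $\phi(X_1^2X_2^2X_1)\ne 0$ shows via part 1(b) that $\tr(X_1^2X_2^2X_1X_3)$ is an indecomposable of degree $6$, and an analogous non-vanishing in degree $7$ realises the value $8$ when $p=3$. I expect the case $p=3$ of parts 5--6 to be the main obstacle: since Lemma~\ref{lemmaLN}.2 fails there, genuinely more canonical words of degrees $7$ and $8$ survive, and eliminating the degree $7$, $8$, $9$ ones needs the delicate substitution $x_k=1$ of Lemma~\ref{lemmaLN}.5, which is precisely what pushes the sharp bound up from $6$ to $8$.
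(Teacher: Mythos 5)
First, note that the paper does not prove this lemma at all: it is quoted verbatim from~\cite{Lopatin_Comm1} (``In~\cite{Lopatin_Comm1} the next lemmas are proved''), so there is no in-paper argument to compare with; your text has to stand on its own, and as it stands it has two genuine gaps. The central one is in part 1(a). Your reduction of $\tr(GX_i)\in(R_{n,d}^{+})^2$ to an identity $G=\sum_k c_kH_k$ in the trace ring $R_{n,d}A_{n,d}$, with $c_k\in R_{n,d}^{+}$ pure traces, is fine (the degree count forcing the $X_i$-linear factor to be $\tr(H_kX_i)$, plus nondegeneracy of the trace form). But the final step --- ``applying the projection $R_{n,d}A_{n,d}\to A_{n,d}$ killing positive-degree pure traces, followed by $\phi$'' --- is not available: no such projection exists, because $A_{n,d}\cap R_{n,d}^{+}\cdot R_{n,d}A_{n,d}\neq 0$ (indeed $\ker\phi$ is contained in this intersection). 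What you actually need is precisely the inclusion $A_{n,d}\cap R_{n,d}^{+}\cdot R_{n,d}A_{n,d}\subseteq\ker\phi$, i.e.\ injectivity of the induced surjection from $N_{n,d}$ onto the trace ring modulo the ideal generated by positive-degree traces. The Cayley--Hamilton identity only gives the opposite (easy) inclusion, which is your part 1(b); the hard inclusion is exactly what the Zubkov relation theorem~\cite{Zub96} and the analysis in~\cite{Lopatin_Comm1} are for, and you assert it rather than prove it. The second gap is parts 5--6: what you offer is an explicit plan (``the plan is to show\ldots'', ``I expect the case $p=3$\ldots''), not a proof; these degree bounds are the main theorems of~\cite{Lopatin_Comm1} and require carrying out the full case analysis of canonical words of degrees $7$, $8$, $9$, including the delicate $p=3$ case, which you have not done.

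Two smaller points. The strengthening you extract from 1(b), ``$\phi(H)=0$ implies $\tr(H)$ decomposable,'' is false as stated: take $H=X_j^3$, so $\phi(H)=x_j^3=0$, yet $\tr(X_j^3)\equiv 3\si_3(X_j)$ is indecomposable for $p\neq 3$. The obstruction occurs only for multidegrees concentrated on one letter (where the Cayley--Hamilton expansion produces a bare $\si_3$ of a single variable with no positive-degree trace companion), so your proof of part 3 needs these pure-power multidegrees excluded or treated separately; for all other multidegrees your Cayley--Hamilton expansion argument does give decomposability, and your reduction of part 2 to 1(a) by partial polarization in $X_{d+1}$ is sound modulo the gap above. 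Part 4, via $\si_2(A)=\tr(A^2-\si_1(A)A+\si_2(A)E)$ and multiplicativity of the adjugate, is correct and complete in all characteristics.
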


The result of substitution $X_1\rightarrow A_1,\ldots,X_d\rightarrow A_d$ in $r\in R_{n,d}$,
where $A_1,\ldots,A_d\in M_n(K)$ ($A_1,\ldots,A_d$ are some generic matrices, respectively),
denote by $r|_{A_1,\ldots,A_d}$. If we consider a subset of
$R_{n,d}$ instead of $r$, then we use the same notation.
If $Q\subset R_{n,d}$, $A_i\in M_n(K)$,
$i=\ov{1,d}$, and all elements of $Q|_{A_1,\ldots,A_d}$ are equal to zero,
then we write $Q|_{A_1,\ldots,A_d}=0$.

If $A,B\in M_n(K)$ are equivalent, i.e. there is $T\in GL_n(K)$
such that $A=TBT^{-1}$, then we write $A\sim B$. Let $\rank(A)$ be the
rank of a matrix $A$. Denote
$$J_1=\left(
\begin{array}{ccc}
0&1&0\\
0&0&0\\
0&0&0\\
\end{array}
\right),\,\, J_2=\left(
\begin{array}{ccc}
0&1&0\\
0&0&1\\
0&0&0\\
\end{array}
\right).
$$

Further $n$ is assumed to be $3$ unless otherwise is stated.

\section{A homogeneous  minimal system of generators}

Let $i,j,k=\ov{1,3}$ be pairwise different. Denote by $G_1$ the set
$$\begin{array}{l}
\tr(X_i);\, \\
\tr(X_iX_j),\, i<j;\,%
\si_2(X_i);\, \\
\tr(X_1X_2X_3),\,\tr(X_1X_3X_2);\,%
\tr(X_i^2X_j);\,%
\si_3(X_i);\, \\
\tr(X_i^2X_j^2),\, i<j;\,%
\tr(X_i^2X_jX_k);\,\\
\tr(X_i^2X_j^2X_k);\,%
\tr(X_i^2X_jX_iX_k),\, j<k.\\
\end{array}$$
Denote by $G_2$ the set
$$\begin{array}{l}
\tr(X^2_iX^2_jX_iX_j),\, i<j;\, %
\tr(X_i^2X_j^2X_iX_k);\,%
\tr(X_1^2X_2^2X_3^2).\\
\end{array}$$
Denote by $G_3$ the set
$$\begin{array}{l}
\tr(X^2_iX^2_jX_iX_j),\, i<j;\, %
\tr(X_i^2X_j^2X_iX_k);\,%
\tr(X_1^2X_2^2X_3^2),\,\tr(X_1^2X_3^2X_2^2);\,\\
\tr(X_iX_j^2X_k^2X_jX_k),\, j<k;\,%
\tr(X_i^2X_j^2X_iX_k^2),\, j<k;\,\\
\tr(X_i^2X_j^2X_k^2X_jX_k),\, j<k.\\
\end{array}$$

\begin{theo}

(i) If $p\neq3$, then $G_{\rm i}=G_1\cup G_2$ is a homogeneous minimal system
of generators of $R_{3,3}$.

\noindent(ii) If $p=3$, then $G_{\rm ii}=G_1\cup G_3$ is a homogeneous minimal
system of generators of $R_{3,3}$.
\end{theo}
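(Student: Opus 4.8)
The plan is to prove that each of $G_{\rm i}$, $G_{\rm ii}$ is (a) a generating set and (b) minimal, by passing to the quotient $\ov{R_{3,3}}=R_{3,3}/(R_{3,3}^{+})^2$ and exhibiting the images of the listed elements as a $K$-basis in each multidegree. By Lemma~\ref{lemmaLR}.5 the highest degree occurring in a h.m.s.o.g.\ is $6$ when $p\neq3$ and $8$ when $p=3$, so only finitely many multidegrees $(a,b,c)$ with $a+b+c\le 6$ (resp.\ $\le 8$) need be examined, and by the $S_3$-symmetry permuting $X_1,X_2,X_3$ it suffices to treat one representative per orbit. For each such multidegree I would first list, using Lemma~\ref{lemmaLR}.3, a spanning set of $\ov{R_{3,3}}$ in that multidegree consisting of traces of canonical words, then reduce this list modulo the relations available: the Cayley--Hamilton/fundamental-trace identities for $3\times3$ matrices (which express $\tr$ of long words via shorter ones, hence trivially modulo $(R_{3,3}^{+})^2$), the identity $\si_2(UV)\equiv\tr(U^2V^2)$ of Lemma~\ref{lemmaLR}.4, and — crucially — the identities of Lemma~\ref{lemmaLN} valid in $N_{3,d}$, which via Lemma~\ref{lemmaLR}.1--2 translate into decomposability statements for the corresponding traces. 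In particular part~2 of Lemma~\ref{lemmaLN} ($x_i^2ux_j^2=0$ when $p\neq3$) is what kills the extra generators of $G_3$ in the non-modular case, explaining the dichotomy, and part~5 is the tool for the delicate degree-$7,8$ multidegrees when $p=3$.

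The argument splits into two halves. For \emph{spanning}: in each multidegree I show the reduced list of canonical traces is contained in the $K$-span of (the images of) the elements of $G_{\rm i}$ resp.\ $G_{\rm ii}$ lying in that multidegree, together with decomposable terms; summing over all multidegrees gives $R_{3,3}=\alg(G)+(R_{3,3}^{+})^2$, and a standard graded-Nakayama argument then yields $R_{3,3}=\alg(G)$. For \emph{minimality}: I must show no element $r$ of the list is decomposable, i.e.\ its image in $\ov{R_{3,3}}$ is nonzero and the images of the listed elements are linearly independent there. For the top-degree elements this is exactly where Lemma~\ref{lemmaLR}.1--2 and the explicit non-vanishing statements Lemma~\ref{lemmaLN}.3--4 ($x_1^2x_2^2x_1\neq0$, and the sign relation) are used: e.g.\ indecomposability of $\tr(X_i^2X_j^2X_iX_j)$ follows from $\phi$ applied to an appropriate word being nonzero in $N_{3,3}$. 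For the lower-degree elements of $G_1$ one can either cite the known h.m.s.o.g.\ of $R_{3,d}$ in characteristic $0$ (Lemma~\ref{lemmaLR}.6, \cite{Abeasis}) and a specialization/base-change argument, or verify indecomposability directly by evaluating on well-chosen numeric matrices (using $J_1,J_2$ and diagonal matrices) so that the candidate invariant is nonzero while all products of lower-degree invariants vanish.

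The main obstacle is the modular case $p=3$ in the multidegrees of total degree $7$ and $8$ — that is, precisely the multidegrees where $G_3$ differs from $G_2$. There the clean vanishing $x_i^2ux_j^2=0$ is unavailable, the space of canonical traces is larger, and one must both (i) find enough relations among canonical words in $N_{3,3}$ (via Lemma~\ref{lemmaLN}.1 and, for words with some letter of degree $1$ or $2$, the substitution trick of Lemma~\ref{lemmaLN}.5) to cut the spanning set down to the size of $|G_3|$ in each multidegree, and (ii) prove the surviving elements are genuinely indecomposable, which again routes through careful $\phi$-computations in $N_{3,3}$ or through explicit matrix evaluations over $\mathbb{F}_3$. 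I expect the bookkeeping of multidegrees, the $S_3$-orbit reduction, and the case analysis $p=2$ / $p\neq 2,3$ / $p=3$ to be lengthy but routine once the $p=3$, degree-$\ge7$ relations are nailed down; that single point is the crux.
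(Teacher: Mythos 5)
Your blueprint coincides with the paper's strategy: work in $\ov{R_{3,3}}=R_{3,3}/(R_{3,3}^{+})^2$, use the degree bound of item~5 of Lemma~\ref{lemmaLR}, and translate decomposability and independence questions into the nil algebra $N_{3,3}$ via $\phi$ (items~1--2 of Lemma~\ref{lemmaLR}) together with the canonical-word calculus and the characteristic-dependent identities of Lemma~\ref{lemmaLN}. The difference lies in what is actually carried out. The paper does not redo the multidegree-by-multidegree bookkeeping you propose: generation of $R_{3,3}$ by $G_{\rm i}$ (resp.\ $G_{\rm ii}$) and indecomposability of every listed element are taken directly from Lemmas~\ref{lemmaLN} and~\ref{lemmaLR}, i.e.\ from the machinery of \cite{Lopatin_Comm1}; the only new verification in the paper is Lemma~\ref{lemma_LI}, the pairwise linear independence in $\ov{R_{3,3}}$ of the equal-multidegree generators ($\tr(X_1X_2X_3)$ vs.\ $\tr(X_1X_3X_2)$, $\tr(X_1^2X_2X_3)$ vs.\ $\tr(X_1^2X_3X_2)$, $\tr(X_1^2X_2^2X_3)$ vs.\ $\tr(X_2^2X_1^2X_3)$, and for $p=3$ also $\tr(X_1^2X_2^2X_3^2)$ vs.\ $\tr(X_1^2X_3^2X_2^2)$), proved exactly by the $\phi$-translation combined with items~1, 2, 4, 5 of Lemma~\ref{lemmaLN} that you name. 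So in spirit your route is the paper's route; but you are proposing to re-derive from scratch the content that the paper imports by citation.

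Precisely there lies the gap. The part you yourself call the crux --- the $p=3$ multidegrees of total degree $7$ and $8$, exactly where $G_3$ differs from $G_2$ --- is not done but only predicted to work, and this is the content that distinguishes cases (i) and (ii) of the Theorem; a self-contained proof cannot leave it as ``expected routine''. A second, smaller soft spot: item~1 of Lemma~\ref{lemmaLR} applies to $\tr(GX_i)$ only when $G$ does not contain $X_i$ (and item~2 similarly for $\tr(GX_i^2)$), so indecomposability of generators such as $\tr(X_i^2X_j^2X_iX_j)$, in which every occurring letter appears more than once, does not follow ``directly from $\phi$ of an appropriate word being nonzero'' as you assert; you would need either the finer results behind Lemmas~\ref{lemmaLN}--\ref{lemmaLR} in \cite{Lopatin_Comm1} or a genuinely worked-out evaluation argument, and the proposal should commit to one and execute it. The linear-independence checks you gesture at are short and your toolkit for them is the right one, so that part is fine in outline.
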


\begin{remark} We have $|G_{\rm i}|=48$, $|G_{\rm ii}|=58$.
\end{remark}

In order to prove the Theorem we formulate the next Lemma.
\begin{lemma}\label{lemma_LI}
1. The elements $\tr(X_1X_2X_3)$, $\tr(X_1X_3X_2)$ are linearly
independent in $\ov{R_{3,3}}$.

\noindent 2. The elements $\tr(X_1^2X_2X_3)$, $\tr(X_1^2X_3X_2)$
are linearly independent in $\ov{R_{3,3}}$.

\noindent 3. The elements $\tr(X_1^2X_2^2X_3)$,
$\tr(X_2^2X_1^2X_3)$ are linearly independent in $\ov{R_{3,3}}$.

\noindent 4. If $p=3$, then the elements $\tr(X_1^2X_2^2X_3^2)$,
$\tr(X_1^2X_3^2X_2^2)$ are linearly independent in $\ov{R_{3,3}}$.
If $p\neq3$, then they are linearly dependent in $\ov{R_{3,3}}$.
\end{lemma}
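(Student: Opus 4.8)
The plan is to translate each claim into a computation inside $N_{3,3}$ by means of Lemma~\ref{lemmaLR}. In parts~1--3 each of the two traces contains the letter $X_3$ exactly once, so, rotating the words cyclically, a combination $\al\tr(W_1)+\be\tr(W_2)$ can be rewritten as $\tr(GX_3)$ with $G=\al G_1+\be G_2$ free of $X_3$: here $(G_1,G_2)=(X_1X_2,\,X_2X_1)$ in part~1, $(X_1^2X_2,\,X_2X_1^2)$ in part~2, and $(X_1^2X_2^2,\,X_2^2X_1^2)$ in part~3. If such a combination were decomposable, Lemma~\ref{lemmaLR}.1(a) would force $\phi(G)=\al\phi(G_1)+\be\phi(G_2)=0$ in $N_{3,3}$, so it suffices to check that each pair $\{\phi(G_1),\phi(G_2)\}$ is linearly independent there.

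Part~1 is immediate, since $x_1x_2$ and $x_2x_1$ have degree $2$ while the ideal defining $N_{3,3}$ is generated in degree $\geq3$. For parts~2 and~3 I would invoke the relations $x^3=0$ of $N_{3,d}$ and the non-vanishing $x_1^2x_2^2x_1\neq0$ of Lemma~\ref{lemmaLN}.4: in part~3, from $\al x_1^2x_2^2+\be x_2^2x_1^2=0$, right multiplication by $x_1$ annihilates the second term and gives $\al\,x_1^2x_2^2x_1=0$, whence $\al=0$, and then $\be=0$ by the symmetric argument $x_1\leftrightarrow x_2$; part~2 is analogous, the sole relation among the canonical words $x_1^2x_2,\,x_1x_2x_1,\,x_2x_1^2$ of that degree and multidegree being $x_1^2x_2+x_1x_2x_1+x_2x_1^2=0$, which already exhibits $x_1^2x_2$ and $x_2x_1^2$ as independent.

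Part~4 is the heart of the matter, and this is where the two cases genuinely split. Both traces carry $X_3^2$, so $\al\tr(X_1^2X_2^2X_3^2)+\be\tr(X_1^2X_3^2X_2^2)=\tr(GX_3^2)$ with $G=\al X_1^2X_2^2+\be X_2^2X_1^2$, and by Lemma~\ref{lemmaLR}.2 its decomposability would give $\phi(G)x_3+x_3\phi(G)=0$ in $N_{3,3}$, i.e. $\al(x_1^2x_2^2x_3+x_3x_1^2x_2^2)+\be(x_2^2x_1^2x_3+x_3x_2^2x_1^2)=0$. If $p=3$, every word here has $x_3$-degree $1$, so Lemma~\ref{lemmaLN}.5 applies: putting $x_3=1$ yields $2\al x_1^2x_2^2+2\be x_2^2x_1^2=0$, hence $\al x_1^2x_2^2+\be x_2^2x_1^2=0$ (as $2=-1\neq0$), and part~3's argument forces $\al=\be=0$, the claimed independence. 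If $p\neq3$ I would instead produce the linear dependence explicitly: polarizing the characteristic-free Newton relation $\tr(Y^3)=\si_1(Y)^3-3\si_1(Y)\si_2(Y)+3\si_3(Y)$ and specializing $Y\to(X_1^2,X_2^2,X_3^2)$ gives, after dividing by $3$ and discarding the terms that acquire a factor $\tr(X_i^2)$ or $\tr(X_i^2X_j^2)$ and therefore lie in $(R_{3,3}^{+})^2$, a congruence $\tr(X_1^2X_2^2X_3^2)+\tr(X_1^2X_3^2X_2^2)\equiv\si_3(X_1^2,X_2^2,X_3^2)$; the remaining task is to show that for $p\neq3$ this polarized $\si_3$ --- unlike in the generic situation --- yields a nontrivial relation between the two traces, the degeneration being a further manifestation of the identity $x_i^2ux_j^2=0$ of Lemma~\ref{lemmaLN}.2.

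The principal obstacle is precisely part~4. For $p\neq3$ one must make the reduction of $\si_3(X_1^2,X_2^2,X_3^2)$ completely explicit and confirm the resulting dependence is nontrivial; for $p=3$ the delicate point is that, beyond degree $4$, canonical words of the relevant multidegrees are frequently linearly dependent in $N_{3,3}$ (cf. Lemma~\ref{lemmaLN}.3), so one has to make sure that the passage through Lemma~\ref{lemmaLN}.5 down to the low-degree relation $\al x_1^2x_2^2+\be x_2^2x_1^2=0$ introduces no loss --- which is exactly the reason this route is preferable to arguing directly with the degree-$5$ words.
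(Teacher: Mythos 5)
Your parts 1--3 and the $p=3$ half of part 4 are fine and essentially coincide with the paper: for part 3 and for $p=3$ your argument (pass to $N_{3,3}$ via items 1(a) and 2 of Lemma~\ref{lemmaLR}, substitute $x_3=1$ by item 5 of Lemma~\ref{lemmaLN}, then kill $\al x_1^2x_2^2+\be x_2^2x_1^2=0$ by right multiplication by $x_1$ and item 4 of Lemma~\ref{lemmaLN}) is the paper's proof verbatim. For parts 1 and 2 the paper instead reduces them to part 3 through the implication $\sum_i\tr(U_iX_j)\equiv0\Rightarrow\sum_i\tr(U_iX_j^2)\equiv0$, while you check independence in $N_{3,3}$ directly; both routes work, and your unproved claim that the only relation in multidegree $(2,1)$ is $x_1^2x_2+x_1x_2x_1+x_2x_1^2=0$ is easy to justify (the component of $\id{x^3}$ in that multidegree can only come from cubes of linear elements). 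Your closing worry about a possible ``loss'' in the $p=3$ case is unfounded: item 5 of Lemma~\ref{lemmaLN} states precisely that the substitution $x_k=1$ turns an identity of $N_{3,d}$ into an identity of $N_{3,d}$, which is the only direction you need.

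The genuine gap is the $p\neq3$ half of part 4, and you flag it yourself. The congruence $\tr(X_1^2X_2^2X_3^2)+\tr(X_1^2X_3^2X_2^2)\equiv\si_3(X_1^2,X_2^2,X_3^2)$ proves nothing unless you also show that this polarized $\si_3$ is decomposable (or at least computes to a multiple of one of the two traces modulo $(R_{3,3}^{+})^2$), and your sketch offers no argument for that --- ``a further manifestation of $x_i^2ux_j^2=0$'' is a hope, not a proof, and one cannot invoke the generating set of Theorem~1 here without circularity. The paper closes this case in two lines entirely inside $N_{3,d}$: since $p\neq3$, item 2 of Lemma~\ref{lemmaLN} gives $x_1^2x_2x_3^2=0$, so by item 1(b) of Lemma~\ref{lemmaLR} the element $\tr(X_1^2X_2X_3^2X_2)$ is decomposable; on the other hand $x_2x_3^2x_2=-x_2^2x_3^2-x_3^2x_2^2$ (item 1 of Lemma~\ref{lemmaLN}) gives, again via item 1(b) of Lemma~\ref{lemmaLR}, $\tr(X_1^2X_2X_3^2X_2)\equiv-\tr(X_1^2X_2^2X_3^2)-\tr(X_1^2X_3^2X_2^2)$. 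Combining the two yields the explicit nontrivial dependence $\tr(X_1^2X_2^2X_3^2)+\tr(X_1^2X_3^2X_2^2)\equiv0$. You should replace the $\si_3$-polarization route by this argument, or else actually prove the decomposability of $\si_3(X_1^2,X_2^2,X_3^2)$ for $p\neq3$, which your proposal leaves open.
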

\begin{proof} Items $1$, $2$ are consequences of item $3$, because
if $\sum_i\tr(U_iX_j)\equiv0$, where words $U_i\in A_{n,d}$ and $\deg_{X_j}(U_i)=0$,
then $\sum_i\tr(U_iX_j^2)\equiv0$.

3. Let $\al\tr(X_1^2X_2^2X_3)+\be\tr(X_2^2X_1^2X_3)\equiv0$, where
$\al\neq0$ or $\be\neq0$. Hence $\al x_1^2x_2^2+\be x_2^2x_1^2=0$
in $N_{3,d}$ by item~1 of Lemma~\ref{lemmaLR}. Since
$x_2^2x_1^2\neq0$ in $N_{3,d}$ (item~4 of Lemma~\ref{lemmaLN}), we
have $\al\neq0$. Thus $x_1^2x_2^2x_1=-(\be/\al)x_2^2x_1^3=0$ in
$N_{3,d}$, but $x_1^2x_2^2x_1\neq0$ in $N_{3,d}$ (item~4 of Lemma~\ref{lemmaLN});
a contradiction.

4. Let $p=3$ and $\al\tr(X_1^2X_2^2X_3^2) + \be
\tr(X_2^2X_1^2X_3^2) \equiv0$, where $\al\neq0$ or $\be\neq0$.
Hence $(\al x_1^2x_2^2 +\be x_2^2x_1^2)x_3 +x_3(\al x_1^2x_2^2+\be
x_2^2x_1^2)=0$ in $N_{3,d}$ (item~2 of Lemma~\ref{lemmaLR}).
Substitution $x_3=1$ gives $2\al x_1^2x_2^2+2\be x_2^2x_1^2=0$
(item~5 of Lemma~\ref{lemmaLN}), which was proved to be a
contradiction.

Let $p\neq3$. The identity $x_1^2x_2x_3^2=0$ in $N_{3,d}$
(item~2 of Lemma~\ref{lemmaLN}) implies
$\tr(X_1^2X_2X_3^2X_2)\equiv0$ (item~1 of Lemma~\ref{lemmaLR}). On
the other hand, $x_2x_3^2x_2=-x_2^2x_3^2-x_3^2x_2^2$ in
$N_{3,d}$ (item~1 of Lemma~\ref{lemmaLN}) implies
$\tr(X_1^2X_2X_3^2X_2)\equiv -\tr(X_1^2X_2^2X_3^2) -
\tr(X_1^2X_3^2X_2^2)$ (item~1 of Lemma~\ref{lemmaLR}).
\end{proof}

\begin{proofTh1} Lemmas~\ref{lemmaLN},~\ref{lemmaLR} imply
that $G_{\rm i}$ ($G_{\rm ii}$, respectively) generates
$R_{3,3}$ when $p\neq3$ ($p=3$, respectively).  These lemmas also
show that all elements of $G_{\rm i}$ (of $G_{\rm ii}$,
respectively) are  indecomposable, when $p\neq3$ ($p=3$, respectively).
Thus it is enough to prove that the elements of $G_{\rm i}$ (of
$G_{\rm ii}$, respectively) of the equal multidegree are linearly
independent in $\ov{R_{3,3}}$. The last follows from Lemma~\ref{lemma_LI}.
\end{proofTh1}

\smallskip
The next proposition follows easily from~\cite{Abeasis}.

\begin{prop}
Let $p=0$. Then the cardinality of a h.m.s.o.g. of $R_{3,d}$ equals to
$M_d=3d+5(_2^d)+24(_3^d)+51(_4^d)+47(_5^d)+15(_6^d)$, where $(_i^d)=0$ for
$i>d$.
\end{prop}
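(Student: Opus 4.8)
The plan is to derive this as a counting corollary of the results in~\cite{Abeasis}. Since $p=0$, the vector space $\ov{R_{3,d}}=R_{3,d}/(R_{3,d}^+)^2$ decomposes as a direct sum over multidegrees $\un{m}\in N_0^d$, and the cardinality of a h.m.s.o.g. is $\sum_{\un{m}}\dim_K(\ov{R_{3,d}})_{\un{m}}$. The key observation is that each multidegree of a generator of $R_{3,d}$ involves at most $6$ distinct letters (by item~6 of Lemma~\ref{lemmaLR}, the degree of any element of a h.m.s.o.g. is at most $6$, so certainly at most $6$ letters can occur with positive exponent). Hence I would first group multidegrees by the number $i$ of letters appearing, and by the $GL$-equivariance (permuting the generic matrices) observe that the contribution of all multidegrees supported on exactly $i$ of the $d$ letters equals $\binom{d}{i}$ times the contribution for $R_{3,i}$ coming from multidegrees in which every one of the $i$ letters appears. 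Call the latter number $c_i$; then the total cardinality is $\sum_{i=1}^{6} c_i \binom{d}{i}$.

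Next I would pin down the constants $c_1,\ldots,c_6$. For $i=1$ this is the number of indecomposable generators of $R_{3,1}$ in the single letter $X_1$, namely $\tr(X_1),\si_2(X_1),\si_3(X_1)$, so $c_1=3$. For $i=2,\ldots,6$ I would extract $c_i$ from the data tabulated in~\cite{Abeasis}: Abeasis--Pittaluga compute $|$h.m.s.o.g. of $R_{3,d}|$ for $d\le 10$, and from these values (say $N_1,\ldots,N_{10}$) one recovers the $c_i$ by the triangular linear system $N_d=\sum_{i=1}^{6}c_i\binom{d}{i}$; solving the first six equations gives $c_1=3$, $c_2=5$, $c_3=24$, $c_4=51$, $c_5=47$, $c_6=15$, matching the claimed polynomial $M_d=3d+5\binom{d}{2}+24\binom{d}{3}+51\binom{d}{4}+47\binom{d}{5}+15\binom{d}{6}$. (Equivalently, and more in the spirit of ``follows easily from~\cite{Abeasis}'', one can read off directly from~\cite{Abeasis} the decomposition of the generating set of $R_{3,6}$ by the multiset of letters used and count those multidegrees involving all six letters, etc.) I would then verify consistency by checking that the formula reproduces the remaining known values $N_7,\ldots,N_{10}$ and, as sanity checks, that $M_1=3$, $M_2=13$, $M_3=48$ — the last agreeing with the Remark following Theorem~1.

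Finally, I would note why the polynomial $M_d$ is valid for \emph{all} $d$, not just $d\le 10$: the argument above only used (a) that no generator involves more than $6$ letters, which is item~6 of Lemma~\ref{lemmaLR}, and (b) the $S_d$-symmetry of the grading under permutation of the generic matrices, which shows the contribution of each ``letter-support type'' is governed by a binomial coefficient with a $d$-independent coefficient. So once $c_1,\ldots,c_6$ are fixed from the small cases, the formula is forced for every $d$. The main obstacle is purely bookkeeping: correctly reading the tables of~\cite{Abeasis} and matching their normalization of ``multidegree'' and ``generator'' to the one used here so that the extracted constants $c_i$ are the right ones; there is no substantive difficulty beyond inverting a $6\times 6$ triangular integer matrix and confirming agreement with the independently computed values $N_7,\ldots,N_{10}$.
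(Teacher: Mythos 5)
Your proposal is correct and follows essentially the same route as the paper's own proof: split a h.m.s.o.g. according to the set of letters actually occurring, use the permutation symmetry to get the cardinality as $\sum_{i=1}^{6}c_i(_i^d)$ with the bound $i\leq 6$ supplied by item~6 of Lemma~\ref{lemmaLR}, and recover $c_1,\ldots,c_6$ from the values computed in~\cite{Abeasis} for $d\leq 6$. The only slip is in your sanity check: with $c_1=3$, $c_2=5$ the formula gives $M_2=11$ (the value from~\cite{Abeasis}), not $13$.
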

\begin{proof}Consider some h.m.s.o.g. of $R_{3,d}$.
Denote by $G_d$ its subset which consists of elements depending on $X_1,\ldots,X_d$.
It is easy to see, that
$$\bigcup_{k=1}^d\; \bigcup_{1\leq i_1<\cdots<i_k\leq d}G_k|_{X_{i_1},\ldots,X_{i_k}}$$
is a h.m.s.o.g. of $R_{3,d}$. Thus $M_d=\sum_{k=1}^d a_k(_k^d)$, where
$a_k=|G_k|$. The algebra $R_{3,d}$ is generated
by elements of the degree not greater than $6$
(for example, see item~$6$ of Lemma~\ref{lemmaLR}).  Hence
$a_i=0$ for $i>6$. In~\cite{Abeasis} it was shown that
$$\begin{array}{ccccccc}
d& 1 & 2 & 3 & 4 & 5 & 6\\
M_d& 3 & 11 & 48 & 189 & 607 & 1635\\
\end{array}.
$$
Thus we get the required.
\end{proof}

\section{A homogeneous system of parameters}
\begin{theo}
The set $P\subset R_{3,3}$
$$\begin{array}{l}
\si_k(X_i),\, i,k=\ov{1,3}, \\
\tr(X_1X_2),\,\tr(X_1X_3),\, \tr(X_2X_3), \\
\tr(X_1^2X_2)+\al_1\tr(X_2^2X_3)+\al_2\tr(X_3^2X_1), \\
\tr(X_1^2X_3)-\be_1\tr(X_3^2X_2),\,\tr(X_1^2X_3)-\be_2\tr(X_2^2X_1), \\
\tr(X_1X_2X_3)+\ga\tr(X_1X_3X_2),\\
\tr(X_1^2X_2^2),\, \tr(X_1^2X_3^2),\, \tr(X_2^2X_3^2),\\
\end{array}$$
 where $\al_1,\,\al_2,\,\be_1,\,\be_2,\,\ga$ are non-zero elements of $K$ and
 $\al_1+\be_1+\al_2\be_2\neq0$,
 is a homogeneous system of parameters of $R_{3,3}$.
\end{theo}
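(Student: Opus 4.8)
The plan is to verify the two defining properties of a homogeneous system of parameters directly: first that $R_{3,3}$ is integral over the subalgebra $K[P]$, and second that the $19$ elements of $P$ are algebraically independent. For the integrality, since $R_{3,3}$ is generated by a finite set of homogeneous elements (a h.m.s.o.g. exists by Theorem~1), it suffices to show that the quotient $R_{3,3}/(P)$ is finite-dimensional over $K$ — equivalently, by the graded Nakayama argument, that the only common zero in $M_{3,3}(K)$ (or over the algebraic closure) of all elements of $P$ is, up to simultaneous conjugation, a point whose $GL_3$-orbit closure contains only nilpotent-type configurations; concretely one shows that if $A_1,A_2,A_3\in M_3(\ov K)$ satisfy $P|_{A_1,A_2,A_3}=0$, then the common zero locus is the nullcone, so that $\sqrt{(P)}=\sqrt{(R_{3,3}^+)}$ and hence $R_{3,3}$ is a finitely generated module over $K[P]$. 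Then algebraic independence follows automatically: $R_{3,3}$ has Krull dimension equal to $\dim M_{3,3} - \dim PGL_3 = 27 - 8 = 19 = |P|$ (the action being generically free on triples), and a set of $19$ homogeneous elements over which a $19$-dimensional ring is integral must be algebraically independent.

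So the real content is the nullcone computation. Here is how I would organize it. Suppose $A_1,A_2,A_3$ annihilate every element of $P$. From $\si_k(X_i)=0$ for $k=1,2,3$ we get that each $A_i$ is nilpotent, hence (up to conjugation) strictly upper triangular. From $\tr(X_iX_j)=0$ for all pairs, together with $\tr(X_1^2X_2^2)=\tr(X_1^2X_3^2)=\tr(X_2^2X_3^2)=0$, one extracts vanishing of various bilinear forms in the strictly-upper-triangular entries. The three ``mixed'' linear relations $\tr(X_1^2X_2)+\al_1\tr(X_2^2X_3)+\al_2\tr(X_3^2X_1)=0$, $\tr(X_1^2X_3)-\be_1\tr(X_3^2X_2)=0$, $\tr(X_1^2X_3)-\be_2\tr(X_2^2X_1)=0$, and the mixed relation $\tr(X_1X_2X_3)+\ga\tr(X_1X_3X_2)=0$ together form a linear system in the six cubic trace quantities $\tr(X_i^2X_j)$ ($i\ne j$) and in $\tr(X_1X_2X_3)$, $\tr(X_1X_3X_2)$; the condition $\al_1+\be_1+\al_2\be_2\ne0$ (and nonvanishing of the other parameters) is precisely what makes this system force all of those quantities to zero rather than leaving a one-dimensional slack — this is where those hypotheses are used. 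Once all degree-$\le3$ traces of the $A_i$ vanish, one has enough to run a short case analysis on the possible Jordan/arrow structures of the triple (using the matrices $J_1,J_2$ introduced in Section~2 as the possible shapes of a single nilpotent $3\times3$ matrix) and conclude that no nonzero orbit survives, i.e. the common zero set is the nullcone.

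I would carry this out in the order: (1) reduce each $A_i$ to strictly upper triangular form; (2) write out the nine trace equations coming from $P$ in coordinates; (3) solve the linear subsystem in the cubic traces using $\al_1+\be_1+\al_2\be_2\ne 0$ and the nonvanishing of $\al_1,\al_2,\be_1,\be_2,\ga$; (4) feed the resulting vanishing into a finite case analysis over the shape of the triple, invoking the structure of $3\times 3$ nilpotents, to show the locus is $\{0\}$ modulo conjugation; (5) deduce finiteness of $R_{3,3}$ over $K[P]$ via graded Nakayama, and finally (6) invoke the dimension count $\dim R_{3,3}=19=|P|$ to get algebraic independence. The main obstacle is step~(4): after the linear algebra kills the low-degree traces, one still has to rule out non-nilpotent-in-the-tuple-sense configurations by hand, and keeping the characteristic-$p$ subtleties (especially $p=2,3$, where the formulas relating $\si_2$ and traces degenerate, cf.\ item~4 of Lemma~\ref{lemmaLR}) under control in that case analysis is the delicate point; one wants a bookkeeping that is uniform in $p$, using $\si_k$ rather than power sums wherever possible.
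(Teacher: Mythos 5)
Your high-level frame is the same as the paper's: show that any triple annihilating $P$ lies in the nullcone (so that Hilbert's criterion gives integrality of $R_{3,3}$ over $K[P]$), then use $\trdeg R_{3,3}=(d-1)n^2+1=19=|P|$ to get algebraic independence for free. But the execution plan has a genuine gap, concentrated exactly where the real work lies. Your step (1) --- ``each $A_i$ is nilpotent, hence (up to conjugation) strictly upper triangular'' --- is false if meant simultaneously, and useless if meant one matrix at a time: a triple of individually nilpotent $3\times3$ matrices need not be simultaneously strictly upper triangularizable, and simultaneous triangularizability is precisely the conclusion one is trying to force (if it held automatically, every trace of every word would vanish and the whole theorem would be vacuous). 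Likewise step (3) cannot work as stated: the elements of $P$ give only three linear relations among the six quantities $\tr(A_i^2A_j)$ and one relation between $\tr(A_1A_2A_3)$ and $\tr(A_1A_3A_2)$, so no choice of the parameters makes this underdetermined linear system force all those traces to vanish. The hypotheses $\al_i,\be_i,\ga\neq0$ and $\al_1+\be_1+\al_2\be_2\neq0$ do not act through a linear solve; in the paper they enter only inside a rank-stratified case analysis (e.g.\ when some $A_j$ has rank $1$, $A_j^2=0$ plus the two $\be$-relations kill $\tr(A_1^2A_3),\tr(A_3^2A_2),\tr(A_2^2A_1)$; when all $A_i$ have rank $2$, the condition $\al_1+\be_1+\al_2\be_2\neq0$ is what produces the contradiction in two of the nine subcases).

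What is missing from your sketch is the structural machinery that makes your step (4) finite and tractable: the classification of nilpotent $3\times3$ matrices via $J_1,J_2$ (Lemma~\ref{lemma0}); Teranishi's lemma that a rank-$2$ nilpotent $A=J_2$ together with $\tr(AB)=\tr(A^2B)=\tr(A^2B^2)=0$ forces $B$ strictly upper triangular (Lemma~\ref{lemma1}); the parametrization of rank-$2$ nilpotents $B=TJ_2T^{-1}$ compatible with $\tr(J_2B)=\tr(J_2^2B^2)=0$ into three explicit families (Lemma~\ref{lemmaI}); the rank-$1$ statement $AB=0$ or $BA=0$ (Lemma~\ref{lemmaII}); and the intermediate nullcone lemmas (Lemmas~\ref{lemma3}--\ref{lemma5}) that let one conclude once enough low-degree traces are known to vanish --- note that even after all degree $\le3$ traces vanish one still needs an argument (the paper uses $\si_2(UV)\equiv\tr(U^2V^2)$, item~4 of Lemma~\ref{lemmaLR}, and induction) to kill the higher invariants, uniformly in $p$. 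Without these ingredients your outline reduces to ``do a case analysis,'' which is precisely the content of the theorem rather than a proof of it; so as it stands the proposal is not a proof, though its steps (5)--(6) and the overall reduction to the nullcone are correct and identical to the paper's.
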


It is easy to see that if the statement of the Theorem is valid over the algebraic
closure of $K$, then it is valid over $K$. Therefore we can assume that $K$ is algebraically
closed.

We need the following Theorem, which was proved by
Hilbert for $p=0$~\cite{Hilbert}. In~\cite{Kraft} there is a proof of the Theorem
for $p=0$, and that proof is suited for arbitrary $p$.

{\it Let an algebraic group $G$ acts regularly on some affine variety $X$.
This action induces the action of $G$ on the coordinate ring $K[X]$ which
consists of regular mappings from $X$ into $K$. Let invariants
$I_1,\ldots,I_s\in K[X]^{G}$ have the property:
if $I_1(x)=\cdots=I_s(x)=0$, where $x\in X$, then for each homogeneous non-constant
invariant $I\in K[X]^G$ we have $I(x)=0$. Then the
ring of invariants $K[X]^G$ is integral over the subring generated by
$I_1,\ldots,I_s$.}

Denote by  $\trdeg (R_{n,d})$ the transcendence degree of $R_{n,d}$
$(n\geq 2)$, i.e. the cardinality of its h.m.s.o.g.
The next Lemma is a folklore statement but we give its proof for completeness.
\begin{lemma}\label{lemma_number}
$\trdeg (R_{n,d})=(d-1)n^2+1$, where $d\geq2$.
\end{lemma}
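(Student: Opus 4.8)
The plan is to identify $\trdeg(R_{n,d})$ with the dimension of the affine quotient variety and to compute the latter through orbit dimensions. Since neither side of the asserted equality changes under base extension (here $R_{n,d}$ remains a finitely generated domain after tensoring with $\ov{K}$), I would first reduce to the case $K=\ov{K}$. Then $R_{n,d}=K[M_{n,d}(K)]^{GL_n(K)}$ is a finitely generated $K$-domain, so $\trdeg(R_{n,d})$ equals the Krull dimension of $Y:=\Spec R_{n,d}=M_{n,d}(K)/\!/GL_n(K)$. Because $GL_n(K)$ is reductive, the quotient morphism $\pi\colon M_{n,d}(K)\to Y$ is surjective, so by the theorem on the dimension of the fibres of a dominant morphism $\dim Y=\dim M_{n,d}(K)-e=dn^2-e$, where $e$ is the dimension of the generic (equivalently, minimal) fibre of $\pi$. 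Everything then reduces to showing $e=n^2-1$.

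For $e\le n^2-1$ I would exhibit a point whose fibre has dimension exactly $n^2-1$: take $A_1=\diag(\la_1,\dots,\la_n)$ with pairwise distinct $\la_i$ and $A_2$ a matrix with all entries non-zero, and set $m_0=(A_1,A_2,0,\dots,0)$; this is where the hypothesis $d\ge2$ enters. A direct computation shows that the only matrices commuting with both $A_1$ and $A_2$ are the scalars, so $\dim GL_n(K)\cdot m_0=n^2-1$; moreover $A_1,A_2$ generate $M_n(K)$ as an algebra, hence the orbit of $m_0$ is closed. Since the scalar matrices act trivially, every $GL_n(K)$-orbit on $M_{n,d}(K)$ has dimension $\le n^2-1$; combining this with closedness of $GL_n(K)\cdot m_0$ and the fact that each fibre of $\pi$ contains a unique closed orbit lying in the closure of every orbit in that fibre, one concludes $\pi^{-1}(\pi(m_0))=GL_n(K)\cdot m_0$, so $e\le n^2-1$. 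For $e\ge n^2-1$ I would use that the locus of tuples whose stabilizer is exactly the group of scalars is dense and open (upper semicontinuity of stabilizer dimension, nonempty by the example $m_0$): a generic $m$ then satisfies $\dim GL_n(K)\cdot m=n^2-1$ with $GL_n(K)\cdot m\subseteq\pi^{-1}(\pi(m))$, whence $e\ge n^2-1$. Together these give $\trdeg(R_{n,d})=\dim Y=dn^2-(n^2-1)=(d-1)n^2+1$.

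The genuinely routine parts are the linear-algebra verifications that the joint centralizer of $A_1$ and $A_2$ consists only of scalars and that $A_1,A_2$ generate $M_n(K)$. The step that I expect to need the most care, and which is the real content beyond elementary bookkeeping, is the reduction "generic fibre of $\pi$ is a single orbit of maximal dimension": this rests on reductivity of $GL_n(K)$ — closed orbits being separated by invariants, and the closure of every orbit containing a closed orbit — and it is here, rather than in the final arithmetic, that the main obstacle to full rigour lies.
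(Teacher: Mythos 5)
Your proposal is correct and follows essentially the same route as the paper: both compute $\trdeg(R_{n,d})=\dim\Spec(R_{n,d})$ by applying the fibre-dimension theorem to the quotient map $\pi$ and identifying the generic fibre with a single closed $GL_n(K)$-orbit of an irreducible (simple) tuple, of dimension $n^2-\dim\St(x)=n^2-1$. The only notable difference is bookkeeping: the paper cites Artin/Procesi for ``the fibre through a simple point is its closed orbit'' and gets genericity from openness of the locus of simple tuples, whereas you re-derive the fibre-equals-orbit step from the unique-closed-orbit property plus an orbit-dimension count and use semicontinuity of the stabilizer dimension, exhibiting an explicit irreducible pair $(A_1,A_2)$ -- which, usefully, makes the role of the hypothesis $d\geq2$ visible, a point the paper leaves implicit in asserting the simple locus is nonempty.
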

\begin{proof}  Recall the Theorem whose proof can be found, for example,
in~\cite{Humph}.

{\it If $\phi: X\to Y$ is a dominant
morphism of irreducible algebraic varieties, then there is an open non-empty
subspace $U\subset Y$ such that for all $y\in U$ we have
$\dim(\phi^{-1}(y)) = \dim(X) - \dim(Y)$.}

Let $\pi: M_{n,d}(K)\to M_{n,d}(K) / GL_n(K)=
\Spec(R_{n,d})$ be categorical quotient, where
$\Spec(R_{n,d})$ is the affine variety whose coordinate ring is isomorphic to $R_{n,d}$.
Denote by $F_d$ the free associative algebra generated freely by $f_1,\ldots, f_d$.
Each representation $\Psi: F_d\to M_{n}(K)$ correspond to some
point $(\Psi(f_1),\ldots,\Psi(f_{d}))\in M_{n,d}(K)$ and vice visa.
Denote by $W$ the set of points of
$M_{n,d}(K)$ which correspond to the simple representations.
It is known that if a point belongs to $W$, then its $GL_n(K)$-orbit is closed and
coincides with  the fiber of $\pi$ which contains it (see~\cite{Artin},~\cite{Procesi74}).

By Frobenius theorem representation $\Psi$ is simple iff there are
$g_1,\ldots,g_{n^2}\in F_d$ such that $\Psi(g_1),\ldots,\Psi(g_{n^2})$ are linearly
independent. Thus $\Psi$ is simple iff
$$\delta=\det\left(
\begin{array}{ccc}
\Psi(g_1)_1 & \cdots &  \Psi(g_1)_{n^2}\\
\vdots&&\vdots\\
\Psi(g_{n^2})_1 & \cdots & \Psi(g_{n^2})_{n^2}\\
\end{array}
\right)\neq0,$$
where $\Psi(g_i)_j$ stands for the $j^{\rm th}$ entry of matrix $\Psi(g_i)$. The number
$\de$ is equal to the value of some polynomial $h_{g_1,\ldots,g_{n^2}}\in K_{n,d}$, depending
on $g_1,\ldots,g_{n^2}$, calculated at the point
$x_{ij}(r)=\Psi(f_r)_{ij}$, where $\Psi(f_r)_{ij}$ is the $(i,j)$ entry of the matrix
$\Psi(f_r)$, $i,j=\ov{1,n}$, $r=\ov{1,d}$.  For
$h\in K_{n,d}$ denote $\ov{V}(h)=\{A\in M_{n,d}(K)| h(A)\neq0\}$.
We have
$$W=\bigcup\limits_{g_1,\ldots,g_{n^2}\in F_d} \ov{V}(h_{g_1,\ldots,g_{n^2}})$$
is an open set.

Apply the Theorem mentioned to $\pi$.
Since $M_{n,d}(K)$ is irreducible, there is  $x\in\pi^{-1}(U)\cap W$.
Thus there exists $y\in U$ such that $x\in \pi^{-1}(y)$. Hence
$\Orb(x)=\pi^{-1}(y)$, where $\Orb(x)$ denotes the orbit of $x$. We
have $\dim\Orb(x)=\dim GL_n(K)-\dim \St(x)$, where $\St(x)$ stands for the
stabilizer of $x$. Since $\dim GL_n(K)=n^2$, $\dim \St(x)=1$,
$\dim M_{n,d}=n^2d$ and $\dim \Spec(R_{n,d})=\trdeg(R_{n,d})$, the
Lemma is proved.
\end{proof}

\begin{lemma}\label{lemma0}
Let $A\in M_3(K)$ and $\si_k(A)=0$ ($k=\ov{1,3}$). Then
$\rank(A)\leq2$. Moreover,
$$\begin{array}{cccccc}
\rank(A)=0&{\rm iff}& A=0,&\qquad\qquad&\qquad\qquad&\qquad\qquad\qquad\qquad\\
\rank(A)=1&{\rm iff}& A\sim J_1,&&&\\
\rank(A)=2&{\rm iff}& A\sim J_2,&&&\\
\rank(A)\leq1&{\rm iff}&A^2=0.&&&
\end{array}$$
\end{lemma}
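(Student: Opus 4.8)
The plan is to prove the final statement, Lemma~\ref{lemma0}, by a direct analysis of the rational canonical (or Jordan) form of a $3\times 3$ matrix $A$ whose characteristic polynomial is $\la^3$. First I would observe that $\si_k(A)=0$ for $k=\ov{1,3}$ is exactly the statement that the characteristic polynomial of $A$ equals $\la^3$, i.e. $A$ is nilpotent. By the Cayley--Hamilton theorem $A^3=0$, so the minimal polynomial of $A$ divides $\la^3$, and since $A$ is a $3\times 3$ matrix its rational canonical form is one of exactly three matrices: the zero matrix $0$ (minimal polynomial $\la$), the matrix $J_1$ (minimal polynomial $\la^2$, a single Jordan block of size $2$ plus a zero block), and the matrix $J_2$ (minimal polynomial $\la^3$, a single Jordan block of size $3$). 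Here I should remark that this classification is characteristic-free: nilpotent orbits in $M_3$ are classified by partitions of $3$, namely $(1,1,1)$, $(2,1)$, $(3)$, with representatives $0$, $J_1$, $J_2$.

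Next I would read off the ranks: $\rank(0)=0$, $\rank(J_1)=1$, $\rank(J_2)=2$. Since rank is a conjugation invariant, this immediately gives that $\rank(A)\le2$ and establishes the three ``iff'' equivalences in the displayed table: $\rank(A)=r$ forces $A$ to be conjugate to the unique nilpotent canonical form of that rank, and conversely. For the ``only if'' direction of each equivalence one uses that the three values of the rank are distinct, so the rank determines which of the three orbits $A$ lies in; for the ``if'' direction one uses that conjugation preserves rank together with $\si_k(J_i)=0$.

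For the last line, $\rank(A)\le1 \iff A^2=0$: if $A^2=0$ then the minimal polynomial divides $\la^2$, so $A\sim 0$ or $A\sim J_1$, whence $\rank(A)\le1$. Conversely if $\rank(A)\le1$ then $A\sim0$ or $A\sim J_1$ by the equivalences already proved, and in both cases a direct check gives $A^2=0$ (indeed $J_1^2=0$). Alternatively, and perhaps cleaner, one can argue directly: $\rank(A)\le1$ means $A=vw^{T}$ for column vectors $v,w$, so $A^2=v(w^{T}v)w^{T}=\tr(A)\,A=\si_1(A)\,A=0$; this avoids invoking the canonical form at all.

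The content here is entirely standard linear algebra, so there is no real obstacle; the only point requiring a moment's care is making sure the Jordan/rational-form classification of nilpotent $3\times3$ matrices is invoked in a way valid over an arbitrary (algebraically closed, or even merely infinite) field $K$ — which it is, since nilpotent matrices always admit a rational canonical form over any field. I would present the argument in the ``canonical form'' style to keep it short, and mention the $A=vw^T$ trick as the slick route for the final equivalence.
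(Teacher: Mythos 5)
Your proof is correct and follows essentially the same route as the paper: the paper's proof likewise reduces to the Jordan-form classification of nilpotent $3\times3$ matrices together with the Cayley--Hamilton identity, which is exactly your argument (your $A=vw^{T}$ observation for the last equivalence is a small extra nicety, not a different method).
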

\begin{proof}
Lemma follows from the Theorem on Jordan form of matrix and the Cayley--Hamilton identity:
$A^3-\tr(A)A^2+\si_2(A)A-\det(A)E=0$, where $A\in M_3(K)$.
\end{proof}

\begin{lemma}\label{lemmaI}
Let $A=J_2$, $B\in M_3(K)$, $\rank(B)=2$, $\si_k(B)=0$
($k=\ov{1,3}$), $\tr(AB)=0$, $\tr(A^2B^2)=0$. Then one of the
following possibilities is valid:
\begin{enumerate}
\item[(1)] $B=\left(
\begin{array}{ccc}
0&t_1&t_2\\
0&0&t_3\\
0&0&0\\
\end{array}
\right)$.
\item[(2)] $B=TAT^{-1}$, $T=\left(
\begin{array}{ccc}
t_3t_4&t_1&t_2\\
t_3&t_4&t_5\\
0&1&0\\
\end{array}
\right)$, where $\det(T)\neq0$.
\item[(3)] $B=TAT^{-1}$, $T=\left(
\begin{array}{ccc}
t_1&t_2&t_3t_4\\
t_3&0&t_4\\
1&0&0\\
\end{array}
\right)$, where $\det(T)\neq0$.
\end{enumerate}
\end{lemma}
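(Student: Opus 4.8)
The strategy is direct linear-algebra normalization: fix $A=J_2$, exploit its stabilizer, and classify all $B$ satisfying the four constraints. Since $\si_k(B)=0$ for $k=\ov{1,3}$ and $\rank(B)=2$, Lemma \ref{lemma0} gives $B\sim J_2$; so $B=TJ_2T^{-1}$ for some $T\in GL_3(K)$ — unless we are in a degenerate sub-case where $B$ itself is already strictly upper triangular, which will turn out to be possibility (1). The plan is to split on $\rank(AB)$ together with the interaction of the images and kernels of $A$ and $B$, and to use the trace conditions $\tr(AB)=0$, $\tr(A^2B^2)=0$ to cut the parameter space down to the three listed families.

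First I would record the relevant structure of $A=J_2$: $A^2=J_1'$ (the matrix $E_{13}$), $A^3=0$, $\ker A=\langle e_1\rangle$, $\mathrm{im}\,A=\langle e_1,e_2\rangle$, $\ker A^2=\langle e_1,e_2\rangle$, $\mathrm{im}\,A^2=\langle e_1\rangle$. The centralizer of $A$ in $GL_3$ consists of matrices of the form $aE+bA+cA^2$ with $a\neq0$; I would use the remaining freedom $B\mapsto SBS^{-1}$ with $S$ centralizing $A$ to put $T$ into one of a few canonical shapes. The key case distinction is on the position of $\ker B$ and $\mathrm{im}\,B^2$ (both one-dimensional, since $B\sim J_2$) relative to the flag $\langle e_1\rangle\subset\langle e_1,e_2\rangle$ determined by $A$. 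If $\mathrm{im}\,B\subseteq\langle e_1,e_2\rangle$ and $\mathrm{im}\,B^2\subseteq\langle e_1\rangle$ compatibly with the flag, $B$ is strictly upper triangular and one gets case (1); the conditions $\tr(AB)=0$ then forces the $(2,3)$ and other entries to satisfy nothing extra beyond upper-triangularity (one checks $\si_k=0$ is automatic), giving the three-parameter family $t_1,t_2,t_3$. Otherwise $B$ is genuinely conjugate to $A$ by a $T$ that does not centralize $A$, and I would parametrize such $T$ by where it sends the standard flag, normalize by the $A$-centralizer on the right, and read off that $\tr(AB)=0$ plus $\tr(A^2B^2)=0$ pin the last column/row of $T$ down to the two shapes in (2) and (3). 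The two shapes correspond to whether $\ker B$ lies inside $\langle e_1,e_2\rangle=\ker A^2$ or not (equivalently whether the last basis vector of the normalizing frame has a nonzero first or third coordinate).

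The computational heart — and the step I expect to be the main obstacle — is the case where $B=TAT^{-1}$ with $T$ not in the centralizer: writing $T$ with nine entries, imposing $\tr(AB)=\tr(ATAT^{-1})=0$ and $\tr(A^2B^2)=\tr(A^2TA^2T^{-1})=0$, and then using the right-multiplication freedom $T\mapsto T(aE+bA+cA^2)$ to kill three of the free parameters, requires a careful bookkeeping of which sub-case the normalization lands in. The inverse $T^{-1}$ appears inside the trace, so it is cleanest to rewrite $\tr(ATAT^{-1})=0$ as $\tr(A\,T\,A\,\mathrm{adj}(T))=0$ (since $\det T\neq0$ and the trace is homogeneous, scaling $T$ is harmless), turning the conditions into polynomial identities in the entries of $T$. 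I would then argue that generically $\tr(AB)=0$ forces the $(3,1)$ entry of $B$ (or an equivalent coordinate) to vanish, which already pushes $\ker B$ into $\ker A^2$, and $\tr(A^2B^2)=0$ eliminates one more degree of freedom; the two surviving branches, distinguished by whether a certain $2\times2$ minor of the reduced $T$ vanishes, are exactly (2) and (3). Finally I would verify the converse — that each of the three displayed $B$'s does satisfy all four hypotheses — which is a short direct check: $\si_k(B)=0$ is clear for (1) by upper-triangularity and for (2),(3) because $B$ is conjugate to $A$, $\rank(B)=2$ holds when the relevant parameters are nonzero, and $\tr(AB)=0$, $\tr(A^2B^2)=0$ follow by substituting the given matrices.
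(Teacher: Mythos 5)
Your plan is essentially the paper's own proof: Lemma~\ref{lemma0} gives $B=TJ_2T^{-1}$, the right-multiplication freedom $T\mapsto T(aE+bA+cA^2)$ (the centralizer of $J_2$) is used to normalize the last row of $T$ into one of three shapes, and the equations $\tr(AB)=\tr(A^2B^2)=0$ then cut these down to the three listed families. One caution: your side remark about replacing $B$ by $SBS^{-1}$ with $S$ centralizing $A$ would change $B$ itself (the lemma classifies $B$, not its conjugacy class) and should be dropped --- the legitimate normalization is the one you also state, multiplying $T$ on the right, which leaves $B$ unchanged.
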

\begin{proof} Lemma~\ref{lemma0} implies $B=TJ_2T^{-1}$ for some $T\in GL_3(K)$.
Let $$L=\left(
\begin{array}{ccc}
a&b&c\\
0&a&b\\
0&0&a\\
\end{array}
\right),$$ where $a,b,c\in K$, $a\neq0$. We have
$TJ_2T^{-1}=(TL)J_2(TL)^{-1}$. There are $a,b,c$
$(a\neq0)$ such that $TL$ is equal to one of the following matrices:
$$\left(
\begin{array}{ccc}
\ast&\ast&\ast\\
\ast&\ast&\ast\\
0&0&1\\
\end{array}
\right),\quad \left(
\begin{array}{ccc}
\ast&\ast&\ast\\
\ast&\ast&\ast\\
0&1&0\\
\end{array}
\right),\quad \left(
\begin{array}{ccc}
\ast&\ast&\ast\\
\ast&\ast&\ast\\
1&0&0\\
\end{array}
\right).
$$

Substituting $TL$ for $T$ and considering the equations
$\tr(AB)=\tr(A^2B^2)=0$ we get the required.
\end{proof}

\begin{lemma}\label{lemmaII}
Let $A=J_1$, $\si_k(B)=0$ ($k=\ov{1,3}$), $\tr(AB)=0$,
$\rank(B)=1$. Then $AB=0$ or $BA=0$.
\end{lemma}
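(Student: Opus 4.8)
The plan is to use the rank-one structure of $B$ directly. Since $\rank(B)=1$, the matrix $B$ can be written as an outer product $B=uv^{T}$ with $u,v\in K^3$ nonzero column vectors (here $v^{T}$ denotes the transpose row vector); this is the standard normal form of a rank-one matrix, and it is also available from $B\sim J_1$ as provided by Lemma~\ref{lemma0}. Writing $A=J_1=e_1e_2^{T}$ with $e_1,e_2$ the first two standard basis vectors of $K^3$, a one-line computation turns the hypothesis $\tr(AB)=0$ into
$$\tr(AB)=\tr\big(e_1e_2^{T}uv^{T}\big)=(e_2^{T}u)(v^{T}e_1)=u_2v_1=0,$$
so that $u_2=0$ or $v_1=0$.

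Next I would read off the two products explicitly: $AB=e_1e_2^{T}uv^{T}=u_2\,e_1v^{T}$ and $BA=uv^{T}e_1e_2^{T}=v_1\,ue_2^{T}$. Because $u\neq0$ and $v\neq0$, the matrices $e_1v^{T}$ and $ue_2^{T}$ are both nonzero, so $AB=0$ exactly when $u_2=0$ and $BA=0$ exactly when $v_1=0$. Combining this with the dichotomy $u_2v_1=0$ from the first paragraph gives $AB=0$ or $BA=0$, which is the claim.

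There is essentially no obstacle here: once $B$ is put in outer-product form the whole argument is a short matrix computation, and the only step deserving a sentence of justification is the passage from $\rank(B)=1$ to $B=uv^{T}$. If one prefers to avoid outer-product language, one may instead write $B=TJ_1T^{-1}$ by Lemma~\ref{lemma0}, take $u$ to be the first column of $T$ and $v^{T}$ the second row of $T^{-1}$, and run the same computation. It is worth noting that the conclusion in fact uses only $\rank(B)=1$ and $\tr(AB)=0$; the vanishing of the $\si_k(B)$ plays no role beyond the appeal to Lemma~\ref{lemma0}.
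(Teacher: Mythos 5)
Your proof is correct, and it takes a genuinely different route from the paper. The paper works entry-by-entry: writing $B=(b_{ij})$, it uses $\tr(AB)=0$ to get $b_{21}=0$, then invokes $B^2=0$ (which comes from $\sigma_k(B)=0$ via Lemma~\ref{lemma0}) to get $b_{23}b_{31}=0$, and finishes with a case analysis that produces three explicit parametrized normal forms for $B$, from which $AB=0$ or $BA=0$ is read off. Your argument instead exploits the outer-product form $B=uv^{T}$ of a rank-one matrix and $A=J_1=e_1e_2^{T}$, reducing everything to the single scalar identity $\tr(AB)=u_2v_1$ together with $AB=u_2\,e_1v^{T}$, $BA=v_1\,ue_2^{T}$; since $u,v\neq0$ this immediately gives the dichotomy. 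This is shorter and more conceptual, and it correctly exposes that the hypothesis $\sigma_k(B)=0$ is not needed at all -- only $\rank(B)=1$ and $\tr(AB)=0$ enter (in the paper that hypothesis is what licenses $B^2=0$, which your route bypasses). What the paper's computation buys in exchange is the explicit list of possible matrices $B$, analogous to Lemma~\ref{lemmaI}, though only the conclusion $AB=0$ or $BA=0$ is actually used later (in the proof of Lemma~\ref{lemma4}), so nothing is lost by your streamlining.
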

\begin{proof} Let $B=(b_{ij})_{1\leq i,j\leq3}$. The equation $\tr(AB)=0$ implies $b_{21}=0$.
Lemma~\ref{lemma0} states $B^2=0$. Thus we obtain
$b_{23}b_{31}=0$. Hence we have three alternatives: $b_{23}=0$,
$b_{31}\neq0$, or $b_{23}\neq0$, $b_{31}=0$, or $b_{23}=b_{31}=0$.
Performing direct computations and making appropriate
substitutions we get that one of the following possibilities is
valid:
\begin{enumerate}
\item[(1)] $B=\left(
\begin{array}{ccc}
b_3&b_2b_3/b_1&-b_3^2/b_1\\
0&0&0\\
b_1&b_2&-b_3\\
\end{array}
\right)$, where $b_1\neq0$.
\item[(2)] $B=\left(
\begin{array}{ccc}
0&b_1b_3/b_2&b_1\\
0&b_3&b_2\\
0&-b_3^2/b_2&-b_3\\
\end{array}
\right)$, where $b_2\neq0$.
\item[(3)] $B=\left(
\begin{array}{ccc}
0&b_1&b_2\\
0&0&0\\
0&b_3&0\\
\end{array}
\right)$, where $b_2=0$ or $b_3=0$.
\end{enumerate}
The Lemma is proved.
\end{proof}

The next Lemma was proved in~\cite{Teranishi86}.
\begin{lemma}\label{lemma1}
Let $A,B\in M_3(K)$, $\rank(A)=2$, $\si_k(A)=\si_k(B)=0$
($k=\ov{1,3}$), and $\tr(AB)=\tr(A^2B^2)=\tr(A^2B)=0$. Then
$\tr(AB^2)=0$. If also $A=J_2$, then $B$ is a strictly upper
triangular matrix.
\end{lemma}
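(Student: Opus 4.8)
The plan is to reduce everything to the case $A=J_2$ and then read off the structure of $B$ directly from its entries. Since $\si_k(A)=0$ for $k=\ov{1,3}$, the characteristic polynomial of $A$ is $\la^3$, so $A$ is a nilpotent matrix of rank $2$; by Lemma~\ref{lemma0} this gives $A\sim J_2$. All of the hypotheses, as well as the conclusion $\tr(AB^2)=0$, are invariant under simultaneous conjugation $A\mapsto TAT^{-1}$, $B\mapsto TBT^{-1}$, so for the first assertion I may assume $A=J_2$ from the outset; for the second assertion $A=J_2$ is assumed anyway. Thus throughout I take $A=J_2$ and write $B=(b_{ij})_{1\leq i,j\leq3}$.

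First I would translate the three trace hypotheses into equations on the entries of $B$. The only nonzero entries of $A=J_2$ are $A_{12}=A_{23}=1$, and the only nonzero entry of $A^2$ is $(A^2)_{13}=1$. Hence the conditions read $\tr(AB)=b_{21}+b_{32}=0$, then $\tr(A^2B)=b_{31}=0$, and finally $\tr(A^2B^2)=(B^2)_{31}=b_{32}b_{21}=0$, where the last equality uses $b_{31}=0$ in the expansion $(B^2)_{31}=b_{31}b_{11}+b_{32}b_{21}+b_{33}b_{31}$. Combining $b_{21}+b_{32}=0$ with $b_{32}b_{21}=0$ yields $b_{32}^2=0$, hence $b_{32}=0$ and $b_{21}=0$ since $K$ is a field. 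Together with $b_{31}=0$ this shows that $B$ is upper triangular.

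Next I would invoke the nilpotency of $B$: an upper triangular matrix has its diagonal entries as eigenvalues, and $\si_k(B)=0$ for all $k$ forces the characteristic polynomial of $B$ to equal $\la^3$, so $b_{11}=b_{22}=b_{33}=0$. Thus $B$ is strictly upper triangular, which is exactly the second assertion. Finally, for such a $B$ the matrix $B^2$ has its single possibly-nonzero entry in position $(1,3)$, so $(B^2)_{21}=(B^2)_{32}=0$, whence $\tr(AB^2)=A_{12}(B^2)_{21}+A_{23}(B^2)_{32}=0$; by the conjugation invariance noted above this also gives $\tr(AB^2)=0$ for an arbitrary rank-$2$ matrix $A$, proving the first assertion.

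The computation is elementary and I anticipate no serious obstacle. The only point demanding a little care is the deduction $b_{32}=0$ from $b_{32}b_{21}=0$ and $b_{21}+b_{32}=0$: one must observe that $b_{32}^2=0$ forces $b_{32}=0$ over a field in \emph{every} characteristic (in particular when $p=2$), rather than dividing by $2$. Likewise the passage from upper triangular and $\si_k(B)=0$ to strictly upper triangular is where the full nilpotency of $B$, and not merely the three trace conditions, is used.
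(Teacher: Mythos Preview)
Your proof is correct and follows essentially the same approach as the paper's own proof: reduce to $A=J_2$ by conjugation, read off $b_{31}=0$, $b_{21}=b_{32}=0$ from the three trace conditions, then use $\si_k(B)=0$ to kill the diagonal. The only difference is that you spell out the intermediate computations (e.g.\ $(B^2)_{31}$, the deduction $b_{32}^2=0$, and the final evaluation of $\tr(AB^2)$) in more detail than the paper does.
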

\begin{proof}
Conjugating, we can suppose that $A=J_2$ and
$B=(b_{ij})_{1\leq i,j\leq3}$. The equation $\tr(A^2B)=0$ implies
$b_{31}=0$, and $\tr(AB)=b_{21}+b_{32}=0$,
$\tr(A^2B^2)=b_{21}b_{32}=0$ imply $b_{21}=b_{32}=0$. It follows
from $\si_k(B)=0$, $k=\ov{1,3}$, that $b_{11}=b_{22}=b_{33}=0$.
Thus $B$ is a strictly upper triangular matrix and $\tr(AB^2)=0$.
\end{proof}

\begin{lemma}\label{lemma3}
Let $Q\subset R_{3,d}$ be the set
$$\begin{array}{l}
\si_k(X_i),\; k=\ov{1,3},\; i=\ov{1,d},\quad \tr(X_i^2X_j),\;
1\leq i\neq j \leq d,\quad \\
\tr(X_i^2X_j^2),\; 1\leq i<j\leq d,\quad \tr(X_{\si(1)}\cdots
X_{\si(r)}),\; r=\ov{2,d},\; \si\in S_r.
\end{array}$$
If $A_1,\ldots,A_s\in M_3(K)$ are such that
$Q|_{A_1,\ldots,A_d}=0$, then for each $r\in R_{3,d}^{+}$ we have
$r|_{A_1,\ldots,A_d}=0$.
\end{lemma}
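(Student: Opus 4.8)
The plan is to show that the invariants in $Q$ already cut out a set of tuples $(A_1,\dots,A_d)$ on which every positive-degree invariant vanishes, by reducing to the case where all $A_i$ are simultaneously strictly upper triangular (in fact, nilpotent with a common flag), and then invoking the structure of the trace algebra $N_{3,d}$ together with Lemma~\ref{lemmaLR}. Since $R_{3,d}$ is generated by elements $\tr(X_{i_1}\cdots X_{i_m})$ (and $\si_2,\si_3$ of generic matrices, which by item~4 of Lemma~\ref{lemmaLR} and the Cayley--Hamilton identity are expressible through traces of words), it suffices to show $\tr(U)|_{A_1,\dots,A_d}=0$ for every word $U$ of positive degree, given $Q|_{A_1,\dots,A_d}=0$.

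First I would establish that $Q|_{A_1,\dots,A_d}=0$ forces each $A_i$ to be nilpotent (from $\si_k(A_i)=0$, via Lemma~\ref{lemma0}) and then that the $A_i$ can be simultaneously conjugated into strictly upper triangular form. For this, one argues on a single $A_i$: by Lemma~\ref{lemma0} it is conjugate to $0$, $J_1$, or $J_2$. The real work is to use the mixed relations $\tr(A_i^2A_j)=0$, $\tr(A_i^2A_j^2)=0$, and the symmetric functions $\tr(A_{\si(1)}\cdots A_{\si(r)})=0$ (all $r$, all $\si\in S_r$) to pin down the remaining $A_j$ relative to a fixed $A_i$ in Jordan form. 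This is precisely where Lemmas~\ref{lemmaI}, \ref{lemmaII}, and \ref{lemma1} enter: Lemma~\ref{lemma1} gives that when $\rank(A_i)=2$ and $A_i=J_2$, any $A_j$ satisfying the listed vanishing conditions is strictly upper triangular; Lemma~\ref{lemmaII} handles the $\rank=1$ case; Lemma~\ref{lemmaI} provides the case analysis for pairs. By running this argument first on the matrix of maximal rank and then inductively on the rest, one obtains a common $T\in GL_3(K)$ with all $TA_iT^{-1}$ strictly upper triangular. (If all $A_i=0$ the conclusion is trivial.)

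Once the $A_i$ are simultaneously strictly upper triangular $3\times 3$ matrices, every product $A_{i_1}\cdots A_{i_m}$ with $m\ge 3$ is zero, so $\tr(U)|=0$ for any word of degree $\ge 3$ automatically; words of degree $1$ and $2$ give $\tr(A_i)=0$, $\tr(A_iA_j)=0$ since these matrices are strictly upper triangular. Thus $\tr(U)|_{A_1,\dots,A_d}=0$ for every word $U$, and hence $r|_{A_1,\dots,A_d}=0$ for every $r\in R_{3,d}^{+}$.

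The main obstacle is the simultaneous-triangularization step: the auxiliary lemmas as stated treat only one matrix $B$ against a fixed $A$ in Jordan form, and it is not a priori clear that the partial coordinate normalizations compatible with $A_1=J_2$ (say) can be carried out for all $A_j$ at once with a single residual change of basis, nor that the various "possibilities~(1),(2),(3)" in Lemmas~\ref{lemmaI}, \ref{lemmaII} are mutually consistent across different $j$. I expect one must argue that the first nonzero $A_i$ has rank $2$ (otherwise all $A_i$ have rank $\le 1$, hence square to zero, and a short separate argument applies), put it in the form $J_2$, then show each subsequent $A_j$ is forced into strictly upper triangular shape by Lemma~\ref{lemma1} applied to the pair $(A_i,A_j)$ — which already outputs "strictly upper triangular," so no further conjugation is needed and consistency is automatic. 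The delicate point is verifying that the hypotheses of Lemma~\ref{lemma1} (namely $\tr(A_i^2A_j^2)=0$ and $\tr(A_i^2A_j)=0$ and $\tr(A_iA_j)=0$) are all among the vanishing conditions supplied by $Q$, which they are, and that rank $2$ can indeed be achieved — this uses that a strictly-upper-triangularizable tuple with some nonzero entry can be adjusted, via the $GL_3$-action and the already-proven partial normal forms, to have a rank-$2$ member or else lies in the $A^2=0$ stratum handled directly.
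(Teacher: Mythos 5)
Your rank-two branch is exactly the paper's: if some $A_j$ has rank $2$, conjugate it to $J_2$ and apply Lemma~\ref{lemma1} to each pair $(A_j,A_i)$; since the hypotheses $\tr(A_jA_i)=\tr(A_j^2A_i)=\tr(A_j^2A_i^2)=0$ are all supplied by $Q$, every $A_i$ becomes strictly upper triangular in the same basis, with no consistency problem, and the conclusion follows. The genuine gap is the remaining stratum, where every $A_i$ is nilpotent of rank at most $1$: you dismiss it with ``a short separate argument applies'' and ``handled directly'', but this is one of the two substantive cases of the lemma and nothing you invoke covers it. In particular, simultaneous strict triangularization there cannot be extracted from the pairwise Lemmas~\ref{lemmaI} and~\ref{lemmaII}: the tuple $A_1=E_{12}$, $A_2=E_{23}$, $A_3=E_{31}$ (matrix units) satisfies all the pairwise conclusions and pairwise conditions of $Q$ ($\si_k(A_i)=0$, $\tr(A_iA_j)=\tr(A_i^2A_j)=\tr(A_i^2A_j^2)=0$, and for each pair $A_iA_j=0$ or $A_jA_i=0$), yet $\tr(A_1A_2A_3)=1$ and the three matrices admit no common flag. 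So the multilinear traces of degree $r\geq 3$ in $Q$ must enter the argument essentially, and your sketch never says how. The closing suggestion that one can ``adjust'' the tuple via the $GL_3$-action to produce a rank-$2$ member is also empty: ranks are conjugation invariants, so the rank-$\leq 1$ case cannot be conjugated away and must be treated on its own.

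For comparison, the paper handles that case without any triangularization: each nonzero $A_i$ is conjugate to $J_1$, which gives the identity $A_iBA_i=\tr(A_iB)A_i$ for all $B$; by induction on word length, using that the multilinear traces lie in $Q$, every word containing some letter at least twice vanishes, hence $\tr(U)|_{A_1,\ldots,A_d}=0$ for all words $U$; then $\si_3$ of a word vanishes since the matrices are singular, and $\si_2$ of words vanishes by induction on degree via $\si_2(UV)\equiv\tr(U^2V^2)$ (item~4 of Lemma~\ref{lemmaLR}). Note also that your opening reduction ``it suffices to show $\tr(U)|=0$ for every word'' is too quick in arbitrary characteristic: item~4 of Lemma~\ref{lemmaLR} is only a congruence modulo $(R_{3,d}^{+})^2$, and $\si_2,\si_3$ are not polynomials in traces when $p=2,3$, so one needs exactly this induction on degree. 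That is harmless in your main case, where strict upper triangularity kills $\si_2$ and $\si_3$ outright, but it matters in the case you left out.
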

\begin{proof} If there is $i=\ov{1,d}$ such that $A_i=0$, then the statement
is proved by induction on $d$.

Let $\rank(A_j)=2$ for some $j=\ov{1,d}$. Conjugating, we
can suppose $A_j=J_2$. Lemma~\ref{lemma1} implies
that every $A_i$ is a strictly upper triangular matrix. Thus the
required is proved.

The only case we has not yet considered is $\rank(A_i)=1$,
$i=\ov{1,d}$ (see Lemma~\ref{lemma0}). Direct computations yields
$J_1BJ_1=\tr(J_1B)J_1$, $B\in M_3(K)$, and conjugating the equality we get
$A_iBA_i=\tr(A_iB)A_i$, $B\in M_3(K)$, $i=\ov{1,d}$. The latter
identity implies that if $V$ is a word in matrices $A_1,\ldots,A_d$
and $\deg_{A_j}(V)\geq2$ for some $j$, then $V=0$. Thus we has
shown that $\tr(U)=0$ for each word $U$ in $A_1,\ldots,A_d$.

Prove by induction on $\deg(V)$ that $\si_2(V)=0$, where $V$ is a
word in matrices $A_1,\ldots,A_d$. We have
$\si_2(UV)\equiv\tr(U^2V^2)$ (item~$4$ of Lemma~\ref{lemmaLR}), where
$U,V$ are words in $A_1,\ldots,A_d$. By induction hypothesis we
have $\si_2(U)=\si_2(V)=0$, which together with proved part of the
Lemma gives $\si_2(UV)=0$.

So the Lemma is proved.
\end{proof}

\begin{lemma}\label{lemma4}
Let $Q\subset R_{3,3}$ be the set
$$\begin{array}{l}
\si_k(X_i),\; k,i=\ov{1,3},\quad \tr(X_iX_j),\; 1\leq i<j \leq
3,\quad\tr(X_i^2X_j),\;
1\leq i\neq j \leq 3,\quad \\
\tr(X_i^2X_j^2),\; 1\leq i<j\leq 3,\quad \al \tr(X_1X_2X_3) +
\be\tr(X_1X_3X_2),
\end{array}$$
where $\al,\be\in K$ are non-zero. If $A_1,A_2,A_3\in M_3(K)$ are
such that $Q|_{A_1,A_2,A_3}=0$, then for each $r\in R_{3,3}^{+}$ we
have $r|_{A_1,A_2,A_3}=0$.
\end{lemma}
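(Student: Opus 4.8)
The plan is to reduce Lemma~\ref{lemma4} to Lemma~\ref{lemma3} by showing that whenever the smaller set $Q$ of Lemma~\ref{lemma4} vanishes at $(A_1,A_2,A_3)$, all the elements of the set $Q$ occurring in Lemma~\ref{lemma3} (with $d=3$) also vanish, so that Lemma~\ref{lemma3} can be applied to conclude $r|_{A_1,A_2,A_3}=0$ for every $r\in R_{3,3}^{+}$. The only genuine difference between the two sets is that Lemma~\ref{lemma3} requires all six traces $\tr(X_{\si(1)}\cdots X_{\si(r)})$ with $r=2,3$ and $\si\in S_r$ to vanish, while Lemma~\ref{lemma4} only assumes $\tr(X_iX_j)=0$ for $i<j$ together with the single combination $\al\tr(X_1X_2X_3)+\be\tr(X_1X_3X_2)=0$. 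Since $\tr(X_iX_j)=\tr(X_jX_i)$, the degree-$2$ traces are automatic; the real work is to deduce $\tr(A_1A_2A_3)=0$ and $\tr(A_1A_3A_2)=0$ separately from the single linear combination together with the other hypotheses.

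First I would handle the easy reductions by the same case split on the ranks of the $A_i$ used in Lemma~\ref{lemma3}. If some $A_i=0$ the statement follows by induction on the number of matrices (via Lemma~\ref{lemma3} for $R_{3,2}$, whose hypotheses are then exactly the degree-$\le 2$ and degree-$3$ ones we have). If some $\rank(A_j)=2$, conjugate so that $A_j=J_2$; then Lemma~\ref{lemma1} forces every $A_i$ to be strictly upper triangular, and for strictly upper triangular matrices every cyclic trace of a word of length $\ge 3$ in three such matrices vanishes — in particular both $\tr(A_1A_2A_3)$ and $\tr(A_1A_3A_2)$ are zero — so all hypotheses of Lemma~\ref{lemma3} hold and we are done. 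This leaves the main case $\rank(A_i)=1$ for $i=1,2,3$.

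In the rank-one case the key obstacle is producing $\tr(A_1A_2A_3)=\tr(A_1A_3A_2)=0$ individually. Here I would use the identity $A_iBA_i=\tr(A_iB)A_i$ valid for rank-one $A_i$ with $\si_k(A_i)=0$ (established inside the proof of Lemma~\ref{lemma3} from $J_1BJ_1=\tr(J_1B)J_1$). Applying it with $B=A_jA_k$ and taking traces gives, for pairwise distinct $i,j,k$, a relation of the form $\tr(A_i A_j A_k A_i A_j A_k)=\tr(A_iA_jA_k)\,\tr(A_iA_jA_kA_i)=\cdots$, and more usefully one gets multiplicative relations linking the two "chiral" triple traces $a:=\tr(A_1A_2A_3)$ and $b:=\tr(A_1A_3A_2)$ to the degree-$\le 2$ data, all of which vanishes. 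Concretely, conjugating $A_1$ to $J_1$ and writing $A_2,A_3$ explicitly, the vanishing of $\si_k(A_i)$, of $\tr(A_iA_j)$ and of $\tr(A_i^2A_j)$ pins down enough entries that $\tr(A_1A_2A_3)$ and $\tr(A_1A_3A_2)$ become expressible in a way showing $a+b$ and $ab$ are both forced to vanish (the symmetric functions of $\{a,b\}$ are themselves invariants of lower or equal degree that are killed by $Q$). Since $\al a+\be b=0$ with $\al,\be\ne 0$, combining $\al a+\be b=0$ with $a+b=0$ (or with $ab=0$) and $\al\ne\be$ forces $a=b=0$. Once both triple traces vanish, the full hypothesis set of Lemma~\ref{lemma3} is satisfied at $(A_1,A_2,A_3)$, and Lemma~\ref{lemma3} yields $r|_{A_1,A_2,A_3}=0$ for all $r\in R_{3,3}^{+}$, completing the proof.

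The step I expect to be the main obstacle is precisely this last one: verifying, by the explicit rank-one computation with $A_1=J_1$, that the elementary symmetric functions of the pair $\{\tr(A_1A_2A_3),\tr(A_1A_3A_2)\}$ lie in the ideal generated by the lower-degree invariants in $Q$ (equivalently, that $a+b$ and $ab$ vanish under the stated hypotheses). This is a finite but somewhat delicate matrix computation; everything else is routine bookkeeping or a direct appeal to Lemmas~\ref{lemma0}, \ref{lemma1}, and \ref{lemma3}.
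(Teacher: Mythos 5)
Your overall architecture is the same as the paper's: reduce to Lemma~\ref{lemma3} after a case split on ranks, handling $A_j=0$ directly and the rank-two case via Lemma~\ref{lemma1} and strict upper triangularity. The gap is in the only nontrivial case, $\rank(A_i)=1$ for all $i$, which you yourself leave as an unverified ``delicate matrix computation'', and the route you sketch for it is partly false. Write $a=\tr(A_1A_2A_3)$, $b=\tr(A_1A_3A_2)$. Your claim that the lower-degree hypotheses force $a+b=0$ is wrong: take $A_1=J_1=E_{12}$, $A_2=E_{23}$, $A_3=E_{31}$ (matrix units); then every $\si_k(A_i)$, $\tr(A_iA_j)$, $\tr(A_i^2A_j)$, $\tr(A_i^2A_j^2)$ vanishes (each $A_i^2=0$), yet $a=1$ and $b=0$, so $a+b\neq0$. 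Consequently an argument resting on $a+b=0$ together with $\al a+\be b=0$ cannot work; moreover it would need $\al\neq\be$, which is not a hypothesis of the lemma (only $\al,\be\neq0$ is assumed).

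What is true, sufficient, and is exactly what the paper establishes, is that \emph{one} of $a,b$ vanishes, i.e. $ab=0$; then $\al a+\be b=0$ with both coefficients nonzero kills the other, and Lemma~\ref{lemma3} applies. The paper gets this from Lemma~\ref{lemmaII}: after conjugating so that $A_1=J_1$, the hypotheses $\si_k(A_2)=0$, $\tr(A_1A_2)=0$, $\rank(A_2)=1$ give $A_1A_2=0$ or $A_2A_1=0$, hence $\tr(A_1A_2A_3)=0$ or $\tr(A_1A_3A_2)=0$. Alternatively, the identity $A_iBA_i=\tr(A_iB)A_i$ that you quote already yields this in one line: $ab=\tr(A_1A_2A_3A_1A_3A_2)$ (apply the identity with $i=1$, $B=A_2A_3$), and the subword $A_3A_1A_3=\tr(A_1A_3)A_3=0$, so $ab=0$ --- no delicate entrywise computation is needed. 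Since you neither carry out this computation nor invoke Lemma~\ref{lemmaII}, and the half of your claim you emphasize ($a+b=0$) fails, the decisive step of the rank-one case is missing from your proposal.
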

\begin{proof}If there is $j=\ov{1,3}$ such that $A_j=0$, then Lemma~\ref{lemma3}
concludes the proof.

If there is $j=\ov{1,3}$ such that $\rank(A_j)=2$, then conjugating
$A_1,A_2,A_3$ we can assume that $A_j=J_2$ (Lemma \ref{lemma0}).
Lemma~\ref{lemma1} implies that $A_1,A_2,A_3$ are strictly upper
triangular matrices. Thus the required is proved.

Let $\rank(A_i)=1$, $i=\ov{1,3}$. Conjugating, we can assume
$A=J_1$. Lemma~\ref{lemmaII} implies $A_1A_2=0$ or $A_2A_1=0$.
Thus $\tr(A_1A_2A_3)=0$ or $\tr(A_1A_3A_2)=0$. Lemma~\ref{lemma3}
concludes the prove.

By Lemma~\ref{lemma0}, all possibilities have been
considered.
\end{proof}

\begin{lemma}\label{lemma5}
Let $Q\subset R_{3,3}$ be the set
$$\begin{array}{l}
\si_k(X_i),\; k,i=\ov{1,3},\quad \tr(X_iX_j),\; 1\leq i<j \leq
3,\quad\tr(X_i^2X_j^2),\; 1\leq i<j\leq 3,\\
\tr(X_1^2X_3), \quad \tr(X_3^2X_2),\quad\tr(X_2^2X_1),\quad\\
\al_1\tr(X_1^2X_2)+\al_2\tr(X_2^2X_3)+\al_3\tr(X_3^2X_1), \\
\be_1 \tr(X_1X_2X_3) + \be_2\tr(X_1X_3X_2),\\
\end{array}$$
where $\al_1,\ldots,\be_2\in K$ are non-zero. If $A_1,A_2,A_3\in
M_3(K)$ are such that $Q|_{A_1,A_2,A_3}=0$, then for each $r\in
R_{3,3}^{+}$ we have $r|_{A_1,A_2,A_3}=0$.
\end{lemma}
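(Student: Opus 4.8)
The plan is to follow the now-familiar case analysis by the ranks of $A_1,A_2,A_3$, reducing each case to Lemma~\ref{lemma3} or Lemma~\ref{lemma4}, whose conclusions already give $r|_{A_1,A_2,A_3}=0$ for all $r\in R_{3,3}^{+}$. The point is that the set $Q$ here is smaller than the set in Lemma~\ref{lemma4}: it no longer contains $\tr(X_1^2X_2)$, $\tr(X_2^2X_3)$, $\tr(X_3^2X_1)$ individually, only a nonzero linear combination of them, and it contains $\beta_1\tr(X_1X_2X_3)+\beta_2\tr(X_1X_3X_2)$ in place of $\alpha\tr(X_1X_2X_3)+\beta\tr(X_1X_3X_2)$ (which is the same shape). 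So vanishing of $Q$ does \emph{not} immediately give vanishing of the larger set in Lemma~\ref{lemma4}; the work is to recover enough of those extra vanishings, or to argue directly, in each rank configuration.

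First I would dispose of the easy cases. If some $A_j=0$, induction together with Lemma~\ref{lemma3} finishes (after deleting that matrix, the remaining traces are a subset of the $Q$ of Lemma~\ref{lemma3} on two variables). If some $\rank(A_j)=2$, conjugate so that $A_j=J_2$; since $Q$ contains $\si_k(X_i)$, all $\tr(X_iX_j)$, all $\tr(X_i^2X_j^2)$, and — crucially — $\tr(X_1^2X_3)$, $\tr(X_3^2X_2)$, $\tr(X_2^2X_1)$, one checks that for each pair $(j,i)$ the hypotheses of Lemma~\ref{lemma1} are met: $\tr(A_j^2A_i)$ is, up to relabeling, one of the three listed degree-$3$ traces, hence zero, and $\tr(A_jA_i)=\tr(A_j^2A_i^2)=0$ are in $Q$. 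Here I need to verify that the three "one-sided" cubic traces in $Q$, under every choice of which index $j$ has rank $2$, supply the needed $\tr(A_j^2A_i)=0$; if a particular cyclic arrangement is missing I would instead use the linear relation $\al_1\tr(X_1^2X_2)+\al_2\tr(X_2^2X_3)+\al_3\tr(X_3^2X_1)=0$ together with the two one-sided ones to solve for the third. Lemma~\ref{lemma1} then forces all three $A_i$ to be strictly upper triangular, and for strictly upper triangular $3\times3$ matrices every element of $R_{3,3}^{+}$ vanishes.

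The remaining and main case is $\rank(A_1)=\rank(A_2)=\rank(A_3)=1$, equivalently $A_i^2=0$ for all $i$ (Lemma~\ref{lemma0}). Here I expect the obstacle to lie: unlike in Lemma~\ref{lemma4} we cannot simply invoke Lemma~\ref{lemmaII} for a convenient pair, because $Q$ does not contain the individual cubic traces $\tr(X_i^2X_j)$. The strategy is: conjugate so that $A_1=J_1$; from $A_1^2=0=A_2^2=A_3^2$ and the identity $A_iBA_i=\tr(A_iB)A_i$ (as in Lemma~\ref{lemma3}) one gets that any word with a repeated letter vanishes, so only multilinear traces $\tr(A_1A_2A_3)$, $\tr(A_1A_3A_2)$ and the $\si_2$'s of words are potentially nonzero. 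Using $\beta_1\tr(A_1A_2A_3)+\beta_2\tr(A_1A_3A_2)=0$ with $\beta_1,\beta_2\neq0$, and the cyclic-plus-transpose relations $\tr(A_1A_2A_3)+\tr(A_1A_3A_2)=\tr(A_1A_2)\tr(A_3)+\cdots$ coming from Cayley--Hamilton (all right-hand terms in $Q$, hence zero), I would solve to conclude $\tr(A_1A_2A_3)=\tr(A_1A_3A_2)=0$. Then every trace of a word vanishes, and the induction on degree via $\si_2(UV)\equiv\tr(U^2V^2)$ (item~4 of Lemma~\ref{lemmaLR}), exactly as at the end of Lemma~\ref{lemma3}, kills all $\si_2$'s of words; hence $r|_{A_1,A_2,A_3}=0$ for every $r\in R_{3,3}^{+}$. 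The delicate bookkeeping is checking that the particular linear combinations present in $Q$ — with the genericity $\al_1,\ldots,\be_2\neq0$ — are exactly enough to pin down the remaining cubic invariants in this rank-one case without circularity; I would handle it by writing $A_1=J_1$, $A_2,A_3$ with their first columns zero (forced by $\tr(A_1A_i)$-type constraints after symmetrizing), and reading off $\tr(A_1A_2A_3)-\tr(A_1A_3A_2)$ as a single monomial whose vanishing, combined with the $\beta$-relation, does the job.
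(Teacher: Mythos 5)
The core of your argument for the all-rank-one case is wrong, and this is a genuine gap. The identity you invoke, $\tr(ABC)+\tr(ACB)=\tr(A)\tr(BC)+\cdots$, is the fundamental trace identity for $2\times 2$ matrices; it fails for $3\times 3$ matrices. In fact item~1 of Lemma~\ref{lemma_LI} of this paper says precisely that no nontrivial linear combination of $\tr(X_1X_2X_3)$ and $\tr(X_1X_3X_2)$ is decomposable, so no relation of the kind you want can exist. Concretely, take $A_1=J_1=E_{12}$, $A_2=E_{23}$, $A_3=E_{31}$, where $E_{ij}$ is the matrix unit with $1$ in entry $(i,j)$: all three are rank-one nilpotents, every element of $Q$ except the $\be$-combination vanishes on this triple, yet $\tr(A_1A_2A_3)=1$ and $\tr(A_1A_3A_2)=0$. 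This also refutes your fallback: $\tr(A_1A_i)=0$ with $A_1=J_1$ only forces the $(2,1)$ entries of $A_2,A_3$ to vanish (not their first columns), and then $\tr(A_1A_2A_3)-\tr(A_1A_3A_2)=(A_2)_{23}(A_3)_{31}-(A_3)_{23}(A_2)_{31}$, which is not a single monomial and, as the example shows, is not forced to vanish by the remaining constraints. What actually closes this case is Lemma~\ref{lemmaII}: since $\rank(A_2)=1$, $\si_k(A_2)=0$ and $\tr(A_1A_2)=0$, either $A_1A_2=0$ or $A_2A_1=0$, so one of $\tr(A_1A_2A_3)$, $\tr(A_1A_3A_2)$ vanishes outright, and then $\be_1\tr(A_1A_2A_3)+\be_2\tr(A_1A_3A_2)=0$ with $\be_1,\be_2\neq0$ kills the other; Lemma~\ref{lemma3} finishes. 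Equivalently (this is the paper's route): since all $A_i^2=0$, every trace $\tr(A_i^2A_j)$ vanishes, so the hypotheses of Lemma~\ref{lemma4} hold, and that lemma, whose rank-one case is exactly the Lemma~\ref{lemmaII} argument, concludes.

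Your rank-two case is in the right spirit and matches the paper, which applies Lemma~\ref{lemma1} to suitable pairs and then quotes Lemma~\ref{lemma4}; but the step ``use the linear relation together with the two one-sided ones to solve for the third'' does not parse: the relation $\al_1\tr(X_1^2X_2)+\al_2\tr(X_2^2X_3)+\al_3\tr(X_3^2X_1)$ involves exactly the three cyclic traces that are absent from $Q$, so it determines one of them only after you have killed the other two --- either by Lemma~\ref{lemma1} applied to a pair $(A_i,A_j)$ with $\rank(A_i)=2$ and $\tr(A_i^2A_j)$ belonging to $Q$, or by $A_i^2=0$ when $\rank(A_i)=1$. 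That requires a short sub-case analysis on the ranks of the other two matrices, which you left unverified; the paper sidesteps it by splitting instead according to which two of the three matrices share the same rank (using the cyclic symmetry of $Q$) and then reducing to Lemma~\ref{lemma4} rather than redoing the strictly-upper-triangular argument.
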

\begin{proof} If there is $j=\ov{1,3}$ such that $A_j=0$, then Lemma~\ref{lemma3}
concludes the proof.

Let $\rank(A_1)=\rank(A_2)=2$. Applying Lemma~\ref{lemma1} to
matrices $A_1$, $A_3$ and $A_2$, $A_1$ we obtain that $\tr(A_3^2A_1)=0$
and $\tr(A_1^2A_2)=0$. Thus $\al_2\tr(A_2^2A_3)=0$. The statement
follows from Lemma~\ref{lemma4}.

Let $\rank(A_1)=\rank(A_2)=1$. Thus $A_1^2=A_2^2=0$
(Lemma~\ref{lemma0}). Hence
$\tr(A_1^2A_2)=\tr(A_2^2A_3)=0$, and thus $\al_3\tr(A_3^2A_1)=0$.
The statement follows from Lemma~\ref{lemma4}.

By symmetry and Lemma~\ref{lemma0}, all possibilities have
been considered.
\end{proof}

\begin{proofTh2}
In order to prove the Theorem it is sufficient to show that if
$A_1,A_2,A_3\in M_3(K)$ are such that $P|_{A_1,A_2,A_3}=0$, then
for each $r\in R_{3,3}^{+}$ we have $r|_{A_1,A_2,A_3}=0$ (see
Lemma~\ref{lemma_number} and the above mentioned Hilbert Theorem).
For convenience enumerate the equations:
\begin{eq}\label{eq1}
\tr(A_2A_3)=0.
\end{eq}\vspace{-5mm}
\begin{eq}\label{eq3}
\tr(A_1^2A_2)+\al_1\tr(A_2^2A_3)+\al_2\tr(A_3^2A_1)=0.
\end{eq}\vspace{-5mm}
\begin{eq}\label{eq4}
\tr(A_1^2A_3)-\be_1\tr(A_3^2A_2)=0.
\end{eq}\vspace{-5mm}
\begin{eq}\label{eq5}
\tr(A_1^2A_3)-\be_2\tr(A_2^2A_1)=0.
\end{eq}\vspace{-5mm}
\begin{eq}\label{eq2}
\tr(A_1A_2A_3)+\ga\tr(A_1A_3A_2)=0.
\end{eq}\vspace{-5mm}
\begin{eq}\label{eq8}
\tr(A_2^2A_3^2)=0.
\end{eq}

If there is $j=\ov{1,3}$ such that $A_j=0$, then Lemma~\ref{lemma3}
concludes the proof.

Let $\rank(A_j)=1$ for some $j=\ov{1,3}$. Thus $A_j^2=0$
(Lemma~\ref{lemma0}). The equations~\Ref{eq4},~\Ref{eq5} imply
that $\tr(A_1^2A_3)=\tr(A_3^2A_2)=\tr(A_2^2A_1)=0$.  The statement
follows from Lemma~\ref{lemma5}.

Let $\rank(A_i)=2$ for every $i=\ov{1,3}$. Conjugating, we can assume
that $A_1=J_2$ (Lemma~\ref{lemma0}). Let $A_2=T_2J_2T_2^{-1}$,
$A_3=T_3J_2T_3^{-1}$, where $T_2,T_3\in GL_3(K)$. Lemma~\ref{lemmaI} implies that there are
three possibilities for $A_2$ and three possibilities for $A_3$.
Instead of letters $t_i$ $(i=\ov{1,5})$ we will use letters $b_i$
for matrix $A_2$ and letters $c_i$ for matrix $A_3$
$(i=\ov{1,5})$. Denote by $(j,k)$ the case when  the $j^{\rm th}$
possibility is valid for $A_2$ and the $k^{\rm th}$ possibility is
valid for $A_3$.

Case $(1,1)$. Here $A_2$, $A_3$ are strictly upper triangular
matrices and the required is obvious.

Case $(1,2)$. The equation~\Ref{eq5} implies $1=0$. It is a
contradiction, thus the case is impossible.

Case $(1,3)$. The equation~\Ref{eq5} implies $1=0$; a contradiction.

Case $(2,1)$. The equation~\Ref{eq5} implies $b_3=0$.
Thus $\det(T_2)=0$; a contradiction.

Case $(2,2)$. The equation~\Ref{eq5} implies $b_2=b_4 b_5 -
\beta_2 b_3 (-c_2+c_4 c_5)$. Thus~\Ref{eq8} shows that $c_3=0$ or
$c_4=b_4$. In the first case $\det(T_3)=0$, and in the second case
the equation~\Ref{eq4} implies $1=0$;  a contradiction.

Case $(2,3)$. The equation~\Ref{eq5} gives $b_2=b_4 b_5 + \beta_2
b_3 c_2$, thus the equation~\Ref{eq8} implies $b_1=b_4^2+c_1-b_4 c_3$. The
equation~\Ref{eq4} implies $c_4=\be_1b_3c_2$. It follows
from~\Ref{eq3} that $\al_1+\be_1+\al_2\be_2=0$; a contradiction.

Case $(3,1)$. The equation~\Ref{eq5} implies $\be_2=0$; a contradiction.

Case $(3,2)$. The equation~\Ref{eq5} implies $b_4=\beta_2(c_2-c_4
c_5)$. Then the equation~\Ref{eq8} gives two possibilities:

\noindent $a)$ $c_3=0$. Thus the equation~\Ref{eq4} gives a contradiction.

\noindent $b)$ $b_1=c_1+(b_3-c_4)c_4$. Thus the equation~\Ref{eq4} gives
$c_2=(\be_1/\be_2) b_2c_3 + c_4c_5$. The equation~\Ref{eq3}
implies $\al_1+\be_1+\al_2\be_2=0$; a contradiction.

Case $(3,3)$. The equation~\Ref{eq8} implies $c_3=b_3$. Then the
equation~\Ref{eq4} gives a contradiction.
\end{proofTh2}

By the same way it was proved that if we slightly change h.s.o.p.-s of $R_{3,2}$,
$R_{4,2}$ for $p=0$ from~\cite{Teranishi86}, then we get
h.s.o.p.-s for arbitrary $p$:

\begin{prop}
1. The set $\{\si_k(X_i)$ ($i=1,2$,
$k=\ov{1,3}$),  $\tr(X_1X_2)$, $\tr(X_1^2X_2)$,  $\tr(X_1X_2^2)$,
$\tr(X_1^2X_2^2)\}$ is a h.s.o.p. of $R_{3,2}$.

2. The set $\{\si_k(X_i)$ ($i=1,2$,
$k=\ov{1,4}$),
  $\tr(X_1X_2)$,
  $\tr(X_1^2X_2)$,  $\tr(X_1X_2^2)$,
  $\tr(X_1^3X_2)$,  $\tr(X_1X_2^3)$, $\tr(X_1^2X_2^2)$,
  $\si_2(X_1X_2)$,
  $\si_2(X_1X_2^2)$,  $\si_2(X_1^2X_2)\}$ is a h.s.o.p. of $R_{4,2}$.
\end{prop}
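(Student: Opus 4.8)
As in the case of Theorem~2, it suffices to prove the statements over the algebraic closure of $K$, so we may assume $K$ is algebraically closed. The plan is to run the very same argument as in the proof of Theorem~2, via the Hilbert Theorem quoted above. First I would note that by Lemma~\ref{lemma_number} we have $\trdeg(R_{3,2})=(2-1)\cdot 3^2+1=10$ and $\trdeg(R_{4,2})=(2-1)\cdot 4^2+1=17$, while the set $P$ of part~1 has cardinality $10$ and the set $P$ of part~2 has cardinality $17$. Every element of $P$ is homogeneous with respect to the $N_0$-grading; and if a ring is integral over the subalgebra generated by $N$ elements and has transcendence degree $N$, then those $N$ elements are necessarily algebraically independent. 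Hence in both cases it is enough to verify the hypothesis of the Hilbert Theorem for the set $P$: namely, that if $A_1,A_2\in M_n(K)$ (with $n=3$ in part~1, $n=4$ in part~2) satisfy $P|_{A_1,A_2}=0$, then $r|_{A_1,A_2}=0$ for every $r\in R_{n,2}^{+}$.

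Part~1 is then immediate. After identifying $\tr(X_1X_2)=\tr(X_2X_1)$ and $\tr(X_2^2X_1)=\tr(X_1X_2^2)$, the set $P$ of part~1 coincides with the set $Q$ of Lemma~\ref{lemma3} in the special case $d=2$. Therefore $P|_{A_1,A_2}=0$ gives $r|_{A_1,A_2}=0$ for all $r\in R_{3,2}^{+}$ by Lemma~\ref{lemma3}, and part~1 follows.

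For part~2 one has $n=4$, and the verification must be done by hand. From $\si_k(A_i)=0$ ($k=\ov{1,4}$) and the Cayley--Hamilton identity one obtains $A_i^4=0$, so each $A_i$ is nilpotent; up to conjugation it is determined by its Jordan type, which is one of the partitions $(1^4),(2,1^2),(2^2),(3,1),(4)$ of $4$, of ranks $0,1,2,2,3$. I would then argue by a case analysis on the unordered pair of Jordan types of $A_1$ and $A_2$, in the spirit of Teranishi's treatment of $p=0$ and of the proof of Theorem~2. The degenerate cases (some $A_i=0$, or $A_i$ of small rank) would be reduced by induction on $d$ and by arguments modelled on Lemmas~\ref{lemma0}--\ref{lemma3}. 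In the remaining cases I would conjugate $A_1$ to its Jordan form, describe $A_2$ by a coset of the centralizer of its Jordan form (the $4\times 4$ analogue of Lemma~\ref{lemmaI}), and then substitute into the nine equations
$$\tr(X_1X_2)=\tr(X_1^2X_2)=\tr(X_1X_2^2)=\tr(X_1^3X_2)=\tr(X_1X_2^3)=\tr(X_1^2X_2^2)=\si_2(X_1X_2)=\si_2(X_1^2X_2)=\si_2(X_1X_2^2)=0.$$
In each case these equations should either yield a contradiction (of the form $1=0$ or a vanishing determinant) or force $A_1$ and $A_2$ to be simultaneously strictly upper triangular. In the latter situation every word in $A_1,A_2$ is strictly upper triangular, hence nilpotent, so $\tr$ and all $\si_k$ of every word in $A_1,A_2$ vanish, and therefore $r|_{A_1,A_2}=0$ for every $r\in R_{4,2}^{+}$ (recall that $R_{4,2}$ is generated by the elements $\si_k$ of words in the generic matrices). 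This verifies the hypothesis of the Hilbert Theorem, and the Proposition follows.

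The main obstacle is the $4\times 4$ case analysis in part~2. Compared with the $3\times 3$ situation there are more Jordan types, the centralizer-coset parametrisation of $A_2$ is more cumbersome, and one has to control carefully which subset of the nine relations above is used in each case. In particular I expect the invariants $\si_2(X_1X_2)$, $\si_2(X_1^2X_2)$, $\si_2(X_1X_2^2)$ to be indispensable exactly in the cases involving the rank-$2$ type $(2^2)$ and the regular nilpotent type $(4)$, where the degree-$\le 3$ trace data alone does not suffice to separate these configurations from points lying outside the nullcone; checking that the $\si_2$-relations do close those cases is the crux of the argument. Once the case analysis is complete, the equality $|P|=\trdeg(R_{n,2})$ automatically upgrades the resulting integrality to the required h.s.o.p. property.
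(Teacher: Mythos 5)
Your plan is essentially the paper's own argument: the paper proves this Proposition only by remarking that the method of Theorem 2 (Hilbert's criterion together with the count $\trdeg(R_{n,2})=(2-1)n^2+1$ from Lemma~\ref{lemma_number} and a nullcone case analysis slightly modifying Teranishi's) carries over, and your part 1, which reduces exactly to Lemma~\ref{lemma3} with $d=2$, is in fact more explicit than what the paper writes. The $4\times 4$ case analysis for part 2 is left unexecuted in your sketch, but the paper supplies no more detail at that point either, so beyond the shared strategy there is nothing further to compare.
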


\begin{center} { ACKNOWLEDGEMENTS} \end{center}

The author is grateful to A.N.Zubkov for helpful advices and
constant attention.


\begin{thebibliography}{99}
\bibitem{Don}S. Donkin, {\it Invariants of several matrices}, Invent. Math.
110(1992), 389--401.

\bibitem{Zub96}A.N. Zubkov, {\it A generalization of the Procesi--Razmyslov
theorem}, (Russian), Algebra i Logika 35(1996), N4, 433--457.

\bibitem{Sibirskii}K.S. Sibirskii, {\it Algebraic invariants of a set
of matrices}, (Russian), Sibirsk. Mat. Zh. 9(1968), N1, 152--164.

\bibitem{Procesi}C. Procesi, {\it Computing with $2\times2$ matrices}, J.
Algebra 87(1984), 342--359.

\bibitem{DKZ}M. Domokos, S.G. Kuzmin, A.N. Zubkov, {\it Rings of
matrix invariants in positive characteristic}, J.Pure
Appl.Algebra 176(2002), 61--80.  %

\bibitem{Dom}M. Domokos, {\it Finite generating system of matrix invariants},
Math.Pannon. 13(2002), N2, 175--181.

\bibitem{Abeasis} S. Abeasis, M. Pittaluga, {\it On a minimal set
of generators for the invariants of $3\times 3$ matrices}, Comm.
Algebra 17(1989), 487--499.


\bibitem{Lopatin_Comm1}A.A. Lopatin, {\it The algebra of invariants of $3\times 3$ matrices over a field of arbitrary
characteristic}, Comm. Algebra, 32(2004), No.~7, 2863--2883.


\bibitem{Teranishi86} Y. Teranishi, {\it The ring of invariants of
matrices}, Nagoya Math. J. 104(1986), 149--161.

\bibitem{Teranishi88} Y. Teranishi, {\it The Hilbert series of rings of matrix
concomitants}, Nagoya Math. J. 111(1988), 143--156.

\bibitem{Hilbert}D. Hilbert, {\it Uber die vollen Invariantensysteme},
Ges. Abh., II, Springer--Verlag, 1970, 287--344.

\bibitem{Kraft}H. Kraft, {\it Geometrische Methoden in der Invariantentheorie},
Friedr. Vieweg \& Sohn: Braunschweig, Wiesbaden, 1985.

\bibitem{Humph}J. E. Humphreys, {\it Linear algebraic groups},
Springer-Verlag: New York, Heidelberg, Berlin, 1975.

\bibitem{Artin}M. Artin, {\it On Azumaya algebras and finite dimensional representations of rings},
J. Algebra 11(1969), 532--563.

\bibitem{Procesi74}C. Procesi, {\it Finite-dimensional representations of algebras},
Israel J.Math. 19(1974), 169--182.

\end{thebibliography}
\end{document}